\numberwithin{equation}{section}
\newtheorem{thm}[equation]{Theorem}
\newtheorem{lemma}[subsection]{Lemma}
\newtheorem{proposition}[subsection]{Proposition}
\newtheorem{prop}[subsection]{Proposition}
\newtheorem{definition}[equation]{Definition}
\newtheorem{cor}[subsection]{Corollary}
\newtheorem{hyp}[equation]{Hypothesis}
\newtheorem{defn}[subsection]{Definition}
\newtheorem{conj}[subsection]{Conjecture}
\newtheorem*{thm*}{Theorem}
\theoremstyle{remark}
\newtheorem{remark}[subsection]{Remark}
\newcommand{\isoarrow}{{~\overset\sim\longrightarrow~}}
\newcommand{\lra}{{\longrightarrow}}
\newcommand{\bT}{{\mathbf{T}}}
\newcommand{\CO}{{\mathcal{O}}}
\newcommand{\CW}{{\mathcal{W}}}
\newcommand{\CE}{{\mathcal{E}}}
\newcommand{\CP}{{\mathcal{P}}}
\newcommand{\ord}{\mathrm{ord}}
\newcommand{\aord}{\mathrm{a-ord}}
\newcommand{\fp}{{\mathfrak{p}}}
\newcommand{\Ts}{{T^{sym}}}
\newcommand{\ZZ}{{\mathbb Z}}
\newcommand{\TT}{{\mathbb T}}
\newcommand{\Gm}{{\mathbb G}_m}
\newcommand{\GmC}{{\mathbb G}_{m,\CC}}
\newcommand{\CG}{{\mathcal{G}}}
\newcommand{\C}{{\mathcal{C}}}
\newcommand{\CK}{{\mathcal{K}}}
\newcommand{\CV}{{\mathcal{V}}}
\newcommand{\CL}{{\mathcal{L}}}
\newcommand{\CJ}{{\mathcal{J}}}
\newcommand{\DD}{{\mathbb D}}
\newcommand{\fm}{{\mathfrak{m}}}
\newcommand{\fg}{{\mathfrak{g}}}
\newcommand{\fk}{{\mathfrak{k}}}
\newcommand{\fn}{{\mathfrak{n}}}
\newcommand{\kp}{{\kappa'}}
\newcommand{\ra}{{~\rightarrow~}}
\newcommand{\CH}{{\mathcal{H}}}
\newcommand{\Ss}{{\mathbb S}}
\newcommand{\QQ}{{\mathbb Q}}
\newcommand{\LL}{{\mathbb L}}
\newcommand{\sS}{{\mathbb S}}
\newcommand{\Qbar}{{\overline{\mathbb Q}}}
\newcommand{\Qlb}{{\overline{\mathbb Q}_{\ell}}}
\newcommand{\Zp}{{{\mathbb Z}_p}}
\newcommand{\RR}{{\mathbb R}}
\newcommand{\ad}{{\mathbf A}}
\newcommand{\al}{{\alpha}}
\newcommand{\af}{{{\mathbf A}_f}}
\newcommand{\CC}{{\mathbb C}}
\newcommand{\Qp}{{{\mathbb Q}_p}}
\newcommand{\Qpb}{{\bar{\Qp}}} 
\begin{document}
\author{Michael Harris}

\address{Michael Harris\\
Department of Mathematics, Columbia University, New York, NY  10027, USA}
 \email{harris@math.columbia.edu}


\title[Square root $p$-adic $L$-functions, I:  Construction of a one-variable measure]{Square root $p$-adic $L$-functions, I:  Construction of a one-variable measure}
\dedicatory{For Jacques Tilouine}
\thanks{This work was partially supported by NSF Grant DMS-1701651.  This work was also supported by the National Science Foundation under Grant No. DMS-1440140 while the author was in residence at the Mathematical Sciences Research Institute in Berkeley, California, during the Spring 2019 semester.}
\maketitle


\tableofcontents


\section{Introduction}   This paper is a continuation, after twenty years (!), of the author's project \cite{HT} with Jacques Tilouine, whose official goal was the construction of one branch of the square root of the anticyclotomic $p$-adic $L$-function for a triple of classical modular forms.  
The unofficial goal of that paper was for this author to benefit from Jacques's patient instruction in Hida theory and $p$-adic $L$-functions.   To the extent that the author does understand anything about the subject, it is largely a result of this collaboration.

Our paper was neither the first nor the last word on the topic of square root $p$-adic $L$-functions.  
The bibliography of \cite{HT} included references to earlier work on anticyclotomic $L$-functions of Hecke characters of imaginary quadratic fields, and of classical $L$-functions of modular forms in Hida families, as well as a combination of the two that had been considered by Andrea Mori (finally published, more than 20 years after its discovery, in \cite{Mo}).  
The specific case of the triple product was vastly extended (and corrected) and put to good use by Darmon and Rotger in a series of difficult papers  on Euler systems and the Birch-Swinnerton-Dyer conjecture over nonabelian extensions of $\QQ$ (see \cite{DR}).  
More recently, the construction has been generalized to Shimura curves in \cite{BM}.  
In the meantime, Gan, Gross, and Prasad had identified a natural setting that includes all these special cases \cite{GGP}, and had formulated precise conjectures regarding the relative representation theory of certain pairs of reductive groups over local fields.  
These conjectures were completed by the global conjecture of Ichino-Ikeda (for orthogonal groups) and its analogue, due to N. Harris (for unitary groups) \cite{II,NH}.  
In these conjectures, $G \supset H$ is a pair of groups -- we consider the case where $G$ is the special orthogonal or unitary group of a vector space $V$ over a local or global field and $H$  the stabilizer in $G$ of an appropriate subspace of codimension $1$.  
The conjectures of \cite{GGP} classify the irreducible representations $\pi$ of $G \times H$ over a {\it local field} that admit a linear form $\pi \ra \CC$ that is invariant under $H$, with respect to the diagonal embedding.  
The conjectures of \cite{II,NH} concern the cuspidal automorphic representations $\pi$ of $G \times H$ over a {\it number field}, and express  the central (anticyclotomic) values of certain  $L$-functions $L(s,\pi)$ as squares of periods of integrals over the ad\`ele group of $H$ of elements of $\pi$ -- we call them {\it Gan-Gross-Prasad periods} up to local and elementary factors.

The Gan-Gross-Prasad (GGP) conjectures have been proved by Waldspurger (for orthogonal groups over $p$-adic fields) and Beuzart-Plessis (for unitary groups, including the archimedean case) \cite{W,BP}.  
The Ichino-Ikeda-N. Harris (IINH) conjecture has been proved for unitary groups, under a shrinking set of simplifying hypotheses, by W. Zhang,  followed by Hang Xue and Beuzart-Plessis; at present it is known for any everywhere tempered automorphic representation of a unitary group that is either supercuspidal at one place or is stable, in the sense that its base change to $GL(n)$ (see below) is cuspidal.   
Ichino had already proved the conjecture for orthogonal groups in low dimensions, including a refinement of the result of \cite{HK} on triple products that was the starting point for \cite{HT}.  
The main observation of \cite{HT} is that the period integrals in \cite{HK} admit a $p$-adic interpolation over Hida families.  
The purpose of the present paper is to apply the same observation to the period integrals that arise in certain cases of the IINH conjecture when $G$ and $H$ are unitary groups.  

Suppose $G$ and $H$ are the unitary groups of hermitian vector spaces $V$ and $V'$, respectively, over a fixed imaginary quadratic field $\CK$,\footnote{One can consider more general CM quadratic extensions of totally real fields; the restriction to the imaginary quadratic case is made for convenience.} with $\dim V = n = \dim V' + 1$.  Stable quadratic base change from $G \times H$ to $\CG := GL(n)_\CK \times GL(n-1)_\CK$
(\cite{La11, Mok, KMSW}) attaches to a (stable) $\pi$ a cuspidal automorphic representation $\Pi = \Pi_n \boxtimes \Pi_{n-1}$ of $\CG$, and $L(s,\pi)$ is then the Rankin-Selberg $L$-function $L(s,\Pi_n \otimes \Pi_{n-1})$.   Moreover, as $G_\alpha \times H_\alpha$
varies over inner forms of $G \times H$, with $H_\alpha \subset G_\alpha$, there is a collection $\Phi(\Pi) = \{\pi_{\alpha,\beta}\}$, where
each  $\pi_{\alpha,\beta}$ is cuspidal automorphic representation of  $G_\alpha \times H_\alpha$, all of which have the same stable
base change $\Pi$.  We only consider the case where $\Pi$ is cohomological for $\CG$ -- i.e., it contributes to the cuspidal cohomology of the locally symmetric space for $\CG$ with appropriate local coefficients.  Then the archimedean components $\pi_{\alpha,\beta,\infty}$ all belong to the respective discrete series of $G_\alpha(\RR) \times H_\alpha(\RR)$ whose infinitesimal character corresponds to that of $\Pi_\infty$.  
The combination of the GGP and IINH conjectures includes the assertion that the central value $L(\frac{1}{2},\Pi_n \otimes \Pi_{n-1}) \neq 0$, then it (or rather its ratio to a different special $L$-value) can be computed, up to elementary and local factors, as a ratio of a product of period integrals (over the adeles of $H_\alpha$) of a {\it unique} $\pi_{\alpha,\beta} \in \Phi(\Pi)$.  Specifically, the GGP conjecture, which is known in this case, asserts that is a unique group $G_\alpha(\RR) \times H_\alpha(\RR)$ and a unique discrete series $\pi_{\alpha,\beta,\infty}$ with the given infinitesimal character that admits a non-trivial linear form
$$\pi_{\alpha,\beta,\infty} \ra \CC$$
that is invariant under the diagonal embedding of $H_\alpha(\RR)$.

As $\Pi$ varies in a $p$-adic family, the period integrals for the corresponding $\pi_{\alpha,\beta,\infty}$ can be seen as distinct branches of a hypothetical square root $p$-adic $L$-function, the relations between which have only begun to be explored; the example of \cite{DR} shows that these relations are subtle even in low-dimensional cases.   
In this paper we treat the branch where the specialization of the $p$-adic interpolation at a classical point of the Hida family is a cup product of a pair of automorphic forms, one of which is holomorphic, the other anti-holomorphic -- in other words, 
$\pi_{\alpha,\beta,\infty} = \pi_{n,\infty}\otimes \pi_{n-1,\infty}$, where $\pi_{n,\infty}$ (resp. $\pi_{n-1,\infty}$) is a holomorphic 
(resp. anti-holomorphic) discrete series representation of $U(V)(\RR)$ (resp. $U(V')(\RR)$).    
Here we treat only the simplest case of a function of a single $p$-adic variable, which arises as a direct application of the construction of $p$-adic families of differential operators in \cite{EFMV}.  
A planned sequel with Ellen Eischen should extend the results of the present paper to multidimensional Hida families.  
In subsequent work with Eischen and Pilloni, we hope to treat cases of cup products in coherent cohomology of higher degree.  Most of the GGP periods, however, do not have such an interpretation.  
The corresponding $p$-adic $L$-functions should exist nonetheless, but we don't see how to construct them.

A construction of $p$-adic Rankin-Selberg $L$-functions for cohomological automorphic representations $\Pi$ 
of $GL(n)\times GL(n-1)$ over $\QQ$ has been known for some time \cite{KMS}; its generalization to arbitrary number fields is a more recent results of Januszewski \cite{J}.  The method used there corresponds to the ``branch,'' as in the previous paragraph, where the groups $G_\alpha(\RR)$
and $H_\alpha(\RR)$ are {\it definite} unitary groups.   This is precisely the case in which the  methods of the present paper 
give no $p$-adic variation at all, because there are no non-trivial differential operators.  In unpublished notes, Eric Urban has sketched the beginning of a construction of a $p$-adic measure in this situation, again in the definite branch.   
In any case, there is little overlap between the results of \cite{KMS,J}, which treat general critical values of a single Rankin-Selberg $L$-function and its cyclotomic twists, and those of this paper, which treats only the central value but allows $\Pi$ to vary in a $p$-adic family.  

The hardest steps in the construction of any $p$-adic $L$-function are the computation of the local factors at archimedean and $p$-adic places.  We deal with these steps in the present paper by avoiding them.  The Ichino-Ikeda formula produces local factors at such places and we do not attempt to interpret them explicitly.   It follows nevertheless from \cite{BP} that these factors can be computed in terms of local Rankin-Selberg zeta integrals for $GL(n)\times GL(n-1)$.  These should be easier to
compute than the Ichino-Ikeda local integrals.  We expect to return to these computations in subsequent papers.  

I thank Ellen Eischen for help with the $p$-adic differential operators, and Rapha\"el Beuzart-Plessis for discussion of the points mentioned in \S\S \ref{locp}, \ref{locinf}.  I thank Eric Urban for reminding me that (in unpublished work) he had considered a  $p$-adic interpolation of periods  in the definite case -- the method is roughly orthogonal to the one studied here.   The anonymous referee deserves special thanks for a careful reading and for requesting clarification at a number of points; it was only by addressing the referee's questions that I realized the subtle difference between the contraction in \cite{EHLS} and the version constructed in \S \ref{contraction}.  Most of all, I would like to take this opportunity to thank Jacques Tilouine for teaching me most of what I know about the subject, and for his  friendship over many decades.

\section{Unitary group Shimura varieties}\label{sec2}

We work over an imaginary quadratic field $\CK$; most of our results go over without change to general CM fields, at the cost of more elaborate notation.  The field $\CK$ is given with a chosen embedding $\iota:  \CK \hookrightarrow \CC$; the complex conjugate embedding is denoted $c$; with respect to $\iota$, the group 
$$U(1) = \ker N_{\CK/\QQ}:  R_{\CK/\QQ} GL(1) \ra GL(1)$$
can be attached to a Shimura datum $(U(1),Y_1)$, where $Y_1^{\pm}$ is the homomorphism  
$$R_{\CC/\RR}(\Gm)_\CC = \CC^\times \ni z \mapsto z/\bar{z}$$
if the sign is $-1$ and is the {\it trivial} map if the sign is $+1$.
The sign is $+1$ (resp. $-1$)
if we consider $U(1)$ to be the unitary group of a $1$-dimensional vector space over $\CK$ endowed with a hermitian form of signature $(0,1)$ (resp. $(1,0)$); see
the discussion in \cite{H19}, \S 2.2 for details.

Let $V$ be an $n$-dimensional vector space over $\CK$, endowed with a hermitian form of signature $(r,s)$ (relative to $\iota$).  Let $U(V)$ be the unitary group of $V$. We define a Shimura datum $(U(V),Y_V)$ as in \cite{H19}; see also Appendix \ref{ShU}.  We choose a point $y \in Y_V$ corresponding to an embedding of Shimura data $(U(1),Y_1^{\pm}) \hookrightarrow (U(V),Y_V)$, and let $K_y \subset U(V)(\RR)$ denote its centralizer; in other words, the homomorphism $y$ factors through a rational subgroup of $U(V)$ isomorphic to $U(1)$.   Then there is an isomorphism $K_y \isoarrow U(r) \times U(s)$, where $U(d)$ is the compact unitary group of rank $d$ for any $d$.  We fix a maximal torus $T = T_y \subset K_y$ containing the chosen $U(1)$; without loss of generality we may assume $T \isoarrow U(1)^{r+s}$ as algebraic groups over $\QQ$, with $U(1)^r \subset U(r)$ and $U(1)^s \subset U(s)$.  

The Harish-Chandra decomposition of $\fg = Lie(G_V)$ is given by
$$\fg = \fp^+_y \oplus \fp^-_y \oplus \fk_y$$
where $\fk_y = Lie(K_y)$ and $\fp^+_y$ and $\fp^-_y$ are canonically isomorphic, respectively, to the holomorphic and anti-holomorphic tangent spaces to $Y_V$ at $y$.  Then $\dim \fp^+_y = \dim \fp^-_y = rs$.  
 
\subsection{Conventions for holomorphic automorphic forms}\label{holomo}
Irreducible representations of $U(d)$ are parametrized by $d$-tuples $a_1 \geq a_2 \geq \dots \geq a_d$, which are identified with characters of some chosen maximal torus.  Thus irreducible automorphic vector bundles $\CE_\kappa$ over the Shimura variety $Sh_V := Sh(U(V),Y_V)$ are parametrized by characters of $T_y$, and thus of $(r,s)$-tuples of integers 
\begin{equation}\label{rstuple}  (b_1 \geq b_2 \geq \dots \geq b_r; b_{r+1}  \geq b_{r+2} \geq \dots \geq b_n).
\end{equation} 
The $\CE_\kappa$ whose global sections defined holomorphic automorphic forms in the discrete series correspond to $\kappa$ of the form
\begin{equation}\label{DS}
\kappa =(a_{s+1} - s, \dots  a_n -s; a_1 + r, \dots, a_s + r)
\end{equation}
where $\alpha$ is the dominant parameter
\begin{equation}\label{alpha}  \alpha: a_1 \geq a_2 \geq \dots \geq a_n
\end{equation} 
\cite[Proposition 2.2.7 (iii)]{H97}.  A $\kappa$ satisfying \eqref{DS} will be called {\it of holomorphic type}.  We let $M_\kappa$ denote the representation space of $K_y$ with highest weight $\kappa$, and let
$$\DD_\kappa = U(\fg)\otimes_{U(\fk_y \oplus \fp^-)} M_\kappa$$
be the corresponding holomorphic discrete series.  

A $\kappa$ for which $\DD_\kappa$ is the $(\fg,K_y)$-module attached to a discrete series representation will be called (for convenience) a {\it holomorphic discrete series parameter}.  If $\kappa$ is the parameter of \eqref{DS}, it is determined by \eqref{alpha}, and we write $\kappa = \kappa_V(\alpha)$.

We consider a codimension $1$ hermitian subspace $V' \subset V$, of signature $(r,s-1)$, and we assume that the base point $y \in Y_{V'} \subset Y_V$, so that its centralizer $K'_y \subset U(V')(\RR)$ is a maximal compact subgroup, isomorphic to $U(r)\times U(s-1)$.  We write
$$\fg' = \fp^{+,\prime}_y \oplus \fp^{-,\prime}_y \oplus \fk'_y$$
for the Harish-Chandra decomposition of $\fg' = Lie(U(V'))$.  
As representation of $K'_y = U(r)\times U(s-1)$, the $r$-dimensional quotient space
$$\fn = \fp^+_y/\fp^{+,\prime}_y$$ is isomorphic to the representation $St_r \otimes Triv$, with parameter $(1,0,\dots,0;0,\dots,0)$.  It follows from the recipe in \cite{H86} that the restriction of $\DD_\kappa$ to $U(\fg')$ can be written
\begin{equation}\label{reskappa}
\DD_{\kappa} ~|_{U(\fg')} = \bigoplus_{i \geq 0} \bigoplus_{M_{\kappa'} \subset M_\kappa\otimes [Sym^iSt_r \otimes Triv]}  \DD_{\kappa'}.
\end{equation}
Here the notation $\subset$ in the subscript means that the left-hand representation is an irreducible constituent of the restriction to $U(r)\times U(s-1)$ of the right-hand.  

In what follows, we let $T' = T_y \cap U(V') = T_y \cap K'_y$.  This is a maximal CM torus in $U(V')$ and the parameters in Lemma \ref{param} below are relative to this torus.  The inclusion $(U(V'),Y_{V'}) \subset (U(V),Y_V)$ is not an embedding of Shimura data, but this can be corrected by replacing $U(V')$ by $U(V')\times U(1)$, where $U(1)$ is the unitary group of the orthogonal complement to $V'$ in $V$.  We ignore this for the purposes of this paper.

  \begin{lemma}\label{param}  If $\kappa = (a_{s+1} - s, \dots  a_n -s; a_1 + r, \dots, a_s + r)$, then as  $i \geq 0$ varies, the set of irreducible representations of $K'_y$ contained in the above sum is given by :
\begin{equation*}
(b_1,\dots, b_r; c_1, \dots, c_{s-1}); 
\end{equation*}
where 
\begin{equation*}  \delta_j := b_j  + s - a_{s+j} \geq 0, ~~ 1 \leq j \leq r;  
\end{equation*}
\begin{equation*} a_1 + r \geq c_1 \geq a_2+r \geq c_2 \dots \geq c_{s-1} \geq a_s + r.
\end{equation*}
The parameter arises in degree $i$ exactly when $\sum_{j = 1}^r \delta_j = i$. 
\end{lemma}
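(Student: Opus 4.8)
The plan is to compute the branching multiplicities in \eqref{reskappa} directly, using the classical Pieri-type branching rule for $U(s) \downarrow U(s-1)$ together with a tensor-product calculation for $U(r)$. First I would observe that, because $\fn \cong St_r \otimes Triv$ as a $U(r) \times U(s-1)$-module, the only factor of $K_y = U(r) \times U(s)$ that is affected nontrivially is the $U(r)$-factor for the tensoring and the $U(s)$-factor for the restriction to $U(s-1)$. So the double sum in \eqref{reskappa} factors: writing $\kappa = (\mu; \nu)$ with $\mu = (a_{s+1}-s, \dots, a_n-s)$ an $r$-tuple and $\nu = (a_1+r, \dots, a_s+r)$ an $s$-tuple, a constituent $M_{\kappa'}$ of $M_\kappa \otimes [Sym^i St_r \otimes Triv]$ has the shape $\kappa' = (\lambda; \rho)$, where $\rho$ ranges over the constituents of $\nu|_{U(s-1)}$ — independently of $i$ — and $\lambda$ ranges over the constituents of $M_\mu \otimes Sym^i St_r$ as a $U(r)$-representation.

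For the $U(s-1)$-part I would invoke the classical branching law: the restriction of the irreducible $U(s)$-representation of highest weight $\nu = (\nu_1 \geq \dots \geq \nu_s)$ to $U(s-1)$ is multiplicity-free and consists of exactly those $(c_1, \dots, c_{s-1})$ with $\nu_1 \geq c_1 \geq \nu_2 \geq c_2 \geq \dots \geq c_{s-1} \geq \nu_s$. Substituting $\nu_j = a_j + r$ yields precisely the interlacing inequalities $a_1 + r \geq c_1 \geq a_2 + r \geq \dots \geq c_{s-1} \geq a_s + r$ claimed in the lemma. For the $U(r)$-part I would use the Pieri rule for tensoring with $Sym^i St_r = Sym^i$ of the standard representation: the constituents of $M_\mu \otimes Sym^i St_r$ are the $\lambda = (b_1 \geq \dots \geq b_r)$ obtained from $\mu = (\mu_1 \geq \dots \geq \mu_r)$ by adding a horizontal strip of size $i$, i.e. $b_1 \geq \mu_1 \geq b_2 \geq \mu_2 \geq \dots \geq b_r \geq \mu_r$ with $\sum_j (b_j - \mu_j) = i$, and each such $\lambda$ occurs with multiplicity one. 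Setting $\mu_j = a_{s+j} - s$, the quantity $b_j - \mu_j = b_j + s - a_{s+j}$ is exactly the $\delta_j$ of the statement; the horizontal-strip condition forces each $\delta_j \geq 0$, and $\sum_j \delta_j = i$ is the degree bookkeeping, which is the last assertion of the lemma. Finally I would record that the highest weight $\kappa'$ of $K'_y$ paired with the parameter $\alpha$ should be written in the same normalization as \eqref{DS}, so that the reported $(b_1, \dots, b_r; c_1, \dots, c_{s-1})$ is literally the list of highest weights of $K'_y$ appearing in \eqref{reskappa}.

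The main obstacle is bookkeeping rather than conceptual: one has to be careful that the recipe from \cite{H86} that produces \eqref{reskappa} really does identify the $U(\fg')$-constituents with the $K'_y$-constituents of $M_\kappa \otimes Sym^i St_r \otimes Triv$ with the correct $\fp^-$-twist, so that no shift by the half-sum of roots in $\fn$ or by $\rho$-factors is lost; this is where I would check signs most carefully, since the passage between the $U(r) \times U(s)$ highest weight and the $(r,s)$-tuple \eqref{rstuple}, and then to the discrete-series normalization \eqref{DS}, introduces the $\pm s$ and $\pm r$ shifts that must line up to give $\delta_j = b_j + s - a_{s+j}$ and the stated interlacing for the $c_j$. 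Once the normalizations are pinned down, combining the multiplicity-one $U(s-1)$-branching with the multiplicity-one Pieri rule for $U(r)$ gives the full list with the degree-$i$ grading, completing the proof.
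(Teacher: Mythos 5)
Your argument is correct and is essentially the paper's own proof: the paper likewise deduces the conditions on the $b_j$ from the Littlewood--Richardson (here, Pieri) rule for tensoring an irreducible $U(r)$-representation with $Sym^i St_r$, and the conditions on the $c_k$ from the classical branching law for $U(s)\downarrow U(s-1)$. You even record the full horizontal-strip interlacing $b_1 \ge a_{s+1}-s \ge b_2 \ge \cdots \ge b_r \ge a_n - s$, of which the displayed condition $\delta_j \ge 0$ in the lemma is the portion that gets used later.
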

\begin{proof}  The assertion for the $b_j$'s follows from the Littlewood-Richardson rule \cite[\S 9.3]{GW} for the tensor product of an irreducible representation of $U(r)$ with $Sym^i St_r$, given our sign conventions; the assertion for the $c_k$'s follows from the usual branching formula for restriction from $U(s)$ to $U(s-1)$.  
\end{proof}

It thus follows from \cite[Lemma 7.2]{H86} that
\begin{prop}\label{diffops}  Let $\kappa$ be a holomorphic discrete series parameter.  Let $\kappa'$ be the highest weight of an irreducible representation of $K'_y$.  Then there is a holomorphic differential operator
$$\delta_{\kappa,\kappa'}:  \CE_\kappa ~|_{Sh(V')} \ra \CE_{\kappa'}$$
if and only if  $\kappa'$ satisfies the inequalities of Lemma \ref{param}.  
\end{prop}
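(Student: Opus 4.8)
The plan is to transcribe the branching computation of Lemma \ref{param} into the language of holomorphic differential operators, the dictionary between the two being supplied by \cite[Lemma 7.2]{H86}.

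First I would recall the mechanism behind \cite[Lemma 7.2]{H86}. Since the chosen point $y$ lies on $Y_{V'}$ as well as on $Y_V$, the holomorphic normal space to $Sh(V')$ inside $Sh_V$ at the image of $y$ is, as a representation of $K'_y$, precisely the space $\fn=\fp^+_y/\fp^{+,\prime}_y$ introduced before \eqref{reskappa}; a holomorphic differential operator $\CE_\kappa~|_{Sh(V')}\ra\CE_{\kappa'}$ is then determined by its transversal symbols in the various orders $i\geq 0$, each of which is a $K'_y$-equivariant map relating $M_{\kappa'}$ to $M_\kappa$ and $Sym^i(\fn)$. In this way \cite[Lemma 7.2]{H86} identifies the space of such operators with $\bigoplus_{i\geq 0}\mathrm{Hom}_{K'_y}(M_{\kappa'},\,M_\kappa\otimes Sym^i(\fn))$, so that a nonzero operator exists if and only if $M_{\kappa'}$ is an irreducible constituent of $M_\kappa\otimes Sym^i(\fn)$ for some $i\geq 0$. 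Since $\DD_{\kappa'}$ is generated over $U(\fg')$ by its lowest $K'_y$-type $M_{\kappa'}$, comparison with \eqref{reskappa} identifies this last condition with the requirement that $\DD_{\kappa'}$ occur in the branching \eqref{reskappa} of $\DD_\kappa~|_{U(\fg')}$.

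Next I would feed in the identification $\fn\cong St_r\otimes Triv$ recorded before \eqref{reskappa}, so that $Sym^i(\fn)\cong Sym^iSt_r\otimes Triv$, and then apply Lemma \ref{param}: $M_{\kappa'}$ is a constituent of $M_\kappa\otimes Sym^iSt_r\otimes Triv$ for some $i\geq 0$ precisely when $\kappa'=(b_1,\dots,b_r;c_1,\dots,c_{s-1})$ satisfies $\delta_j=b_j+s-a_{s+j}\geq 0$ for every $j$ together with $a_1+r\geq c_1\geq a_2+r\geq\dots\geq c_{s-1}\geq a_s+r$, and then the smallest admissible $i$ is $\sum_{j=1}^r\delta_j$. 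Combining this with the previous paragraph gives both implications of the ``if and only if'', and incidentally shows that $\delta_{\kappa,\kappa'}$ may be taken of order $\sum_{j}\delta_j$. (The hypothesis that $\kappa$ be a holomorphic discrete series parameter is used only to guarantee that \eqref{reskappa} genuinely is the $(\fg',K'_y)$-module branching of an honest discrete series, which is the setting of \cite{H86}.)

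The hard part --- in fact the only nonformal point --- will be the bookkeeping of normalizations: whether the holomorphic forms on $\CE_\kappa$ generate $\DD_\kappa$ or its contragredient, whether the transversal jets are valued in $\fn$ or in $\fn^\vee$, and how $\kappa$ is normalized relative to the discrete series parameter in \eqref{DS}. These are precisely the ``sign conventions'' under which Lemma \ref{param} was proved; once they are fixed compatibly with \eqref{DS}, \eqref{reskappa}, and the description of $\fn$ as carrying $St_r$ rather than its dual, the inequalities emerge exactly as displayed, with no positivity imposed on the $c_k$ beyond the interlacing already present. With \cite[Lemma 7.2]{H86} quoted as a black box, the one remaining routine check is that its hypotheses apply here, i.e.\ that one has a genuine embedding of Shimura data through the common point $y$ --- which holds after the harmless replacement of $U(V')$ by $U(V')\times U(1)$ noted above.
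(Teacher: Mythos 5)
Your argument is correct and follows the same route as the paper: the paper likewise deduces the proposition directly from \cite[Lemma 7.2]{H86} together with the branching decomposition \eqref{reskappa} and the explicit parameter computation of Lemma \ref{param}. Your elaboration of the dictionary between transversal symbols and $\mathrm{Hom}_{K'_y}(M_{\kappa'},M_\kappa\otimes Sym^i(\fn))$ is exactly the mechanism the cited lemma encapsulates, so nothing further is needed.
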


The following lemma is then obvious.

\begin{lemma}\label{GGP1}  Suppose $\kappa'$ satisfies the inequalities of Lemma \ref{param}.  Then $\kappa'$ is a holomorphic discrete series parameter for $G_{V'}$, and is of the form
$\kappa' = \kappa_{V'}(\alpha')$ for the dominant parameter $\alpha'$ of $G_{V'}$ given by
$$\alpha' = (a'_1 \geq \dots \geq a'_{n-1}) = (c_1-r \geq \dots \geq c_{s-1}-r \geq b_1 + s-1 \geq \dots \geq b_r + s-1). $$
\end{lemma}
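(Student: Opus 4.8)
The plan is to deduce the statement by a direct substitution into \eqref{DS}, read off for the group $G_{V'}=U(V')$ in place of $G_V$. Since $V'$ has signature $(r,s-1)$ and dimension $n-1$, \eqref{DS} asserts that a dominant parameter $\alpha'=(a'_1\geq\cdots\geq a'_{n-1})$ of $G_{V'}$ produces the holomorphic-type weight
$$\kappa_{V'}(\alpha')=(a'_s-(s-1),\dots,a'_{n-1}-(s-1);\ a'_1+r,\dots,a'_{s-1}+r)$$
for $K'_y\cong U(r)\times U(s-1)$. Given any weight $\kappa'=(b_1\geq\cdots\geq b_r;\ c_1\geq\cdots\geq c_{s-1})$ of $K'_y$, this formula can be inverted block by block: put $a'_j=c_j-r$ for $1\leq j\leq s-1$ and $a'_{s-1+k}=b_k+(s-1)$ for $1\leq k\leq r$. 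Substituting these values back into the displayed formula returns $(b_1,\dots,b_r;c_1,\dots,c_{s-1})=\kappa'$, and the resulting $(n-1)$-tuple is exactly the $\alpha'$ in the statement. Consequently it suffices to prove that this $\alpha'$ is a genuine dominant parameter: for then $\kappa_{V'}(\alpha')=\kappa'$ exhibits $\kappa'$ as being of holomorphic type for $V'$, hence, by the discussion surrounding \eqref{DS} applied to $V'$, as a holomorphic discrete series parameter of $G_{V'}$.

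It therefore remains to verify $a'_1\geq a'_2\geq\cdots\geq a'_{n-1}$. The inequalities internal to the first $s-1$ coordinates, $c_1-r\geq\cdots\geq c_{s-1}-r$, and those internal to the last $r$ coordinates, $b_1+(s-1)\geq\cdots\geq b_r+(s-1)$, are automatic, since $\kappa'$ is already a dominant weight of $U(r)\times U(s-1)$. The only inequality that genuinely uses Lemma \ref{param} is the ``seam'' condition joining the two blocks,
$$c_{s-1}-r\geq b_1+(s-1),$$
and I expect essentially all the (modest) work to be here. The natural route is to combine the last interlacing inequality of Lemma \ref{param}, $c_{s-1}\geq a_s+r$, with the upper bound on $b_1$ implicit in the Littlewood--Richardson constraint recorded there, together with the dominance (and regularity) of the ambient parameter $\alpha=(a_1\geq\cdots\geq a_n)$, so as to deduce $c_{s-1}-r\geq a_s\geq b_1+(s-1)$. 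Beyond that, the argument is bookkeeping: keeping the shifts by $\pm r$, $\pm s$, $\pm(s-1)$ aligned and using the sign conventions of Lemma \ref{param} consistently. Once the seam inequality holds, $\alpha'$ is dominant and the lemma follows.
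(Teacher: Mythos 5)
Your reduction is the right one, and it is evidently all the author has in mind (the paper offers no argument, introducing the statement with ``The following lemma is then obvious''): inverting \eqref{DS} for signature $(r,s-1)$ produces exactly the displayed $\alpha'$, and everything comes down to the single seam inequality $c_{s-1}-r\geq b_1+s-1$. The gap is in your proposed proof of that inequality. You invoke ``the upper bound on $b_1$ implicit in the Littlewood--Richardson constraint,'' but Lemma \ref{param} records only the \emph{lower} bounds $\delta_j=b_j+s-a_{s+j}\geq 0$; the Pieri rule for $M_\kappa\otimes Sym^iSt_r$ does impose interlacing upper bounds on $b_2,\dots,b_r$, but it places no upper bound on $b_1$, which grows without bound as $i\to\infty$. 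So the chain $c_{s-1}-r\geq a_s\geq b_1+s-1$ cannot be established: its second link reads $b_1\leq a_s-s+1$, whereas Lemma \ref{param} only guarantees $b_1\geq a_{s+1}-s$ and allows $b_1$ arbitrarily large.

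Moreover the seam inequality genuinely fails in large degree, so no amount of bookkeeping will close the gap. Take $n=3$, $(r,s)=(1,2)$, $\alpha=(0,0,0)$, so $\kappa=(-2;1,1)$. Lemma \ref{param} forces $c_1=1$ and permits every $b_1\geq -2$, i.e.\ every degree $i=\delta_1=b_1+2\geq 0$; but $\alpha'=(c_1-1,\,b_1+1)=(0,\,b_1+1)$ is dominant only for $b_1\leq -1$, i.e.\ $i\leq 1$. For $i\geq 2$ the constituent $\DD_{\kappa'}$ is a unitary lowest-weight module of $U(1,1)$ that is not in the discrete series. The statement therefore needs the additional hypothesis $b_1+s-1\leq c_{s-1}-r$ (equivalently, a bound on $\delta_1$ in terms of the gap $a_{s-1}-a_{s+1}$ of $\alpha$), and with that hypothesis imposed your substitution argument is already a complete proof, the seam inequality now being an assumption rather than a consequence. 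You were right to isolate the seam as the only point of substance; the difficulty is that it is not derivable from Lemma \ref{param}, and the paper's ``obvious'' conceals this.
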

\begin{definition}\label{deg} We say $\delta^{\kappa,\kappa'}$ is of degree $b$ if $\sum_{j = 1}^r \delta_j = b$ in Lemma \ref{param}.
\end{definition}

\subsubsection{Parameters and Hodge structures}

Let $\alpha$ be the dominant parameter in \eqref{alpha}.  Then $\alpha$ is the highest weight of an irreducible representation $W_\al$ of $G_V$, or of $GL(n)$.  As in \cite{H97} we can attach to $\alpha$ a collection of Hodge numbers $(p_i,q_i) = (p_i(\al),q_i(\al))$ with $p_i = a_i + n-i$ and $p_i + q_i = n-1$ for all $i$.  We let $\sS = R_{\CC/\RR} \GmC$.  For each $i$, let  $M_\CC(p_i)$ denote the complex $1$-dimensional vector space on which $\sS(\CC) \isoarrow \CC^\times \times \CC^\times$ acts by the character 
$$(z_1,z_2) \mapsto z_1^{-p_i(\al)}z_2^{-n+1+p_i(\al)},$$
and let $M_\CC(\al) = \oplus_{i = 1}^n M_\CC(p_i)$.  Similarly, let $M(p_i)$ denote $R_{\CC/\RR}M_\CC(p_i)$; this is a $2$-dimensional vector space with action of $\sS(\RR)$.  Then $M(\al) = \oplus_{i = 1}^n M(p_i)$ is a real Hodge structure of dimension $2n$.  We denote $M(\al)$ by the shorthand list of $p_i$'s:
\begin{equation}\label{Mal}
M(\al) = (a_1 + n-1, \dots, a_i + n-i,\dots, a_n).
\end{equation}

Let $\pi$ be a cuspidal automorphic representation of $G_V$, and write $\pi = \pi_\infty\otimes \pi_f$, where $\pi_\infty$ is an irreducible $(\fg_V,K_y)$-module and $\pi_f$ is an irreducible representation of $G_V(\af)$.  
Suppose $\pi$ contributes to the cohomology $H^0(Sh(V),\CE_{\kappa_V(\al)})$; in other words
\begin{equation}\label{pif} H^0(Sh(V),\CE_{\kappa_V(\al)})[\pi] := Hom_{G_V(\af)}(\pi_f,H^0(Sh(V),\CE_{\kappa_V(\al)}) \neq 0.
\end{equation}
This is a property that depends only on $\pi_\infty$; it says precisely that $\pi_\infty$ is (depending on conventions) either isomorphic to or the contragredient of  $\DD_{\kappa_V(\alpha)}$.  In the convention of \cite{EHLS}, 
\begin{hyp}\label{multone}  Assuming \eqref{pif}, $\dim H^0(Sh(V),\CE_{\kappa_V(\al)})[\pi] = 1$.   
\end{hyp} 
This will be proved in the sequel to \cite{KMSW}, and we will assume it here; the $p$-adic $L$-function can be constructed without 
the assumption of Hypothesis \ref{multone} but at the cost of additional notation.
In any case it is known by \cite{La11} that, assuming \eqref{pif}, the base change $\Pi = BC_{\CK/\QQ}(\pi)$ exists as a cuspidal cohomological automorphic representation of $GL(n)_\CK$.  The compatible family  of homomorphisms
$$\rho_{\pi,\ell}:  Gal(\Qbar/\CK) \ra GL(n,\Qlb),$$
defined by many people (including in \cite{CHL11}, in most cases) is geometric in the sense of Fontaine-Mazur.  In particular, the restriction of $\rho_{\pi,\ell}$ to a decomposition group at a prime dividing $\ell$ is de Rham with the Hodge numbers $(p_i(\al),q_i(\al))$ defined above.  

\begin{remark}\label{rationality}  Under hypothesis \eqref{pif} we know that $\pi_f$ has a model as an admissible representation of $G_V(\af)$ over a number field $E(\pi)$.  We will be working with spaces of $p$-adic automorphic forms, so we will implicitly be assuming that the integer ring $\CO_{E(\pi)}$, together with all the other integer rings that arise in the subsequent constructions, is embedded in a  sufficiently large $p$-adic integer ring denoted $\CO$.  We will briefly need to work with models of (finite parts of) automorphic representations over the fraction field of $\CO$, which we denote $\LL$.  The smooth representation theory of the finite adeles is  indifferent to the topology of the fields of coefficients.
\end{remark}

Suppose $\pi' = \pi'_\infty \otimes \pi'_f$ is an automorphic representation such that the contragredient $\pi^{\prime,\vee}$ contributes to the cohomology $H^0(Sh(V'),\CE_{\kappa_V(\al')})$, where $\alpha'$ is the highest weight of an irreducible representation $W_{\al'}$ of $GL(n-1)$.  In particular, $\pi'$ is {\it anti-holomorphic} -- it contributes to the cohomology in degree $d' = \dim Sh(V')$ of the automorphic vector bundle
$$\Omega^{d'}_{Sh(V')}\otimes \CE_{\kappa_V(\al')}^{\vee},$$
which is the Serre dual of $\CE_{\kappa_V(\al')}$.  For such a $\pi'$, we assume the analogue of Hypothesis \ref{multone} holds for $\pi^{\prime,\vee}$, and we assume $\kappa' = \kappa_{V(\al')}$ satisfies the inequalities of Lemma \ref{param}; in other words, that $\alpha'$ is one of the parameters in Lemma \ref{GGP1}.   

\subsubsection{Parameters for the Hodge filtration}\label{forPan}

Using the shorthand of \eqref{Mal}, we have
so
$$M(\alpha')^{\vee} = (-b_r+ r-1,\dots, -b_1, -c_{s-1}+n-2, \dots, -c_1+r).$$
We consider the $2n(n-1)$-dimensional real Hodge structure
$$M(\alpha,\alpha') = R_{\CC/\RR}M_\CC(\al)\otimes M_\CC(\alpha')^{\vee}.$$
Then
$M(\alpha,\alpha')_\CC$ is the sum of eigenspaces of the form
$$(a_i + n-i - b_k +k-1, \bullet);   (a_i + n-i - c_j + j - 1 + r, \bullet),$$
where in each case the two integers in the ordered pair add up to $2n-3$.  The space $M(\alpha,\alpha')_\CC$ contains an $n(n-1)$-dimensional subspace $F^+M(\alpha,\alpha')$, defined as in \cite{H13}:  it  consists of pairs $(x,y)$ as above with $x > y$.

\subsection{Igusa towers and pairings}

Let $p$ be a prime that splits in $\CK$ as the product $\fp \cdot \fp'$.   Identifying the algebraic closures of $\QQ$ in $\CC$ and in $\Qpb$ places the embeddings of $\CK$ in $\CC$ and in $\Qpb$ in bijection.   We let $\fp$ be the prime above $p$ associated to the fixed embedding $\iota:  \CK \hookrightarrow \CC$ and identify 
\begin{equation}\label{UGL} U(V)(\Qp) \isoarrow GL(n,\CK_\fp) \isoarrow GL(n,\Qp)
\end{equation}  
in such a way that $\fp^-_y \oplus \fk_y$ is identified with the Lie algebra of an upper triangular parabolic subalgebra of $Lie(GL(n))$.  
We also denote by $incl_p:  \CK \hookrightarrow \Qpb$ the embedding corresponding to $\fp$.  
We fix a neat level subgroup $K \subset U(V)(\af)$ with $K = K_p\times K^p$ with $K_p = GL(n,\Zp)$.  The Shimura variety  ${}_KSh(U(V))$ then has a smooth model ${}_KS(V)$ as a moduli space (Shimura variety of abelian type) over $Spec(\CO)$ for some finite $\Zp$-algebra $\CO$.  For each $\kappa$ as above the vector bundle $\CE_\kappa$ extends to a vector bundle over ${}_KS(V)$.  

We choose $K$ so that $K \cap U(V')(\af) = K'$ is neat and admits a factorization $K' = GL(n-1,\Zp)\times K^{\prime,p}$.   We define ${}_{K'}S(V')$ as in the previous paragraph, and assume the embedding
\begin{equation}\label{emb}   {}_{K'}S(V') \hookrightarrow {}_{K}S(V) 
\end{equation}
restricts (see \S \ref{igusa} below) to an embedding of ordinary loci
\begin{equation}\label{embo}   {}_{K'}S(V')^{ord} \hookrightarrow {}_{K}S(V)^{ord} 
\end{equation}
which lifts to a morphism of Igusa towers
\begin{equation}\label{emboIg}   {}_{K'}Ig(V') \hookrightarrow {}_{K}Ig(V). 
\end{equation}

\subsubsection{Embeddings of Igusa towers}\label{igusa} As in \cite{EHLS}, we use the theory of ordinary Hida families developed in Hida's book \cite{Hida} (and completed by Kai-Wen Lan's verification of the necessary conditions:  see the discussion in \cite[\S 2.9.6]{EHLS}).  This theory is based on the study of analytic functions on Igusa towers.  In this paper we use the conventions of \S 2 of \cite{EHLS}.   We choose a $p$-adic embedding $\iota_p:  \CK \hookrightarrow \CC_p$ as
in \cite[\S 1.4.1]{EHLS}, so that $\iota_p$ and the chosen inclusion $\iota: \CK \hookrightarrow \CC$ are associated as in \S 1 of \cite{HLS}.   

In order to define cohomological pairings between $p$-adic modular forms on the Shimura varieties ${}_{K'}S(V')$ and ${}_{K}S(V)$ we need to know that the map \eqref{embo} actually exists.  In the first place, strictly speaking there is a map of Shimura data
\begin{equation}\label{plus1}
(U(V'),Y_{V'})\times (U(1),Y_1^+) \hookrightarrow (U(V),Y_V).
\end{equation}
The second factor on the left is a (pro)-finite set without any additional arithmetic structure -- recall that with our conventions the homomorphism $Y_1^+$ is trivial.  To understand the map \eqref{embo} it is nevertheless better to start with the map of Shimura data of PEL type
\begin{equation}\label{plus2} (G(U(V')\times U(1)),X_{V'}^{''}) \hookrightarrow (GU(V),X_V))
\end{equation}
with $X_{V'}^{''}$ defined as in \cite{H19}, \S 2.2.  As a reminder:  we can also embed $G(U(V')\times U(1))$ in $GU(V') \times GU(1)$, with 
$GU(1) = R_{\CK/\QQ} (\Gm)_\CK$. Then $X_{V'}^{''}$ is a $G(U(V')\times U(1))(\RR)$-conjugacy class of homomorphisms whose image under the embedding
in $GU(V') \times GU(1)$ lies in the product $X_{V'} \times X_{0,1}$, where $X_{0,1}$ is the homomorphism 
$R_{\CC/\RR} (\Gm)_\CC \ra GU(1)(\RR)$ whose value on $\RR$-valued points is given by $z \mapsto \bar{z}$.

 Now the map \eqref{plus2} defines a morphism of PEL Shimura varieties, and thus of smooth models in level $K = K_p \times K^p$ as above:
 \begin{equation}\label{embplus}   {}_{K'}S(G(U(V')\times U(1)), X_{V'}^{''}) \hookrightarrow {}_{K}S(GU(V),X_V),
\end{equation}
with notation (and level subgroup $K'$) defined by analogy with \eqref{emb}.  We define ordinary loci
$${}_{K_{V'}}S(GU(V'),X_{V'})^{ord} \subset {}_{K'}S(GU(V'),X_{V'});$$ $${}_{K_1'}S(GU(1),X_{0,1})^{ord} \subset {}_{K'}S(GU(1),X_{0,1});$$
$${}_{K'}S(G(U(V')\times U(1)), X_{V'}^{''})^{ord}  \subset {}_{K'}S(G(U(V')\times U(1)), X_{V'}^{''})$$
as well as
$${}_{K}S(GU(V),X_{V})^{ord} \subset {}_{K}S(GU(V),X_V).$$
(Level subgroups are assumed compatible with all morphisms.)
    
We recall the discussion of the Igusa varieties in \cite{HLS}.  For any $n \geq 0$ we can define Igusa coverings -- we omit the prime-to-$p$ level structures
from the notation --
$$GIg(V')_n \ra {}_{K_{V'}}S(GU(V'),X_{V'})^{ord}; GIg(0,1)_n \ra {}_{K_1'}S(GU(1),X_{0,1})^{ord}$$
and
$$GIg(V)_n \ra {}_{K}S(GU(V),X_{V})^{ord}.$$
(We reserve the notation $Ig(V)$ for the Igusa towers over the unitary group Shimura varieties, and $Ig(V)_n$ for the Igusa covering in level $p^n$.)
When $n = 0$ this is the identity map.
These correspond to pairs $(\underline{A}_{V'},j^o_{V'}), (\underline{A}_{0,1},j^o_{0,1}), (\underline{A}_{V},j^o_V)$ as in \cite{HLS}, (2.1.6.2).  Here for example,
$\underline{A}_V$ is a quadruple $(A,\lambda,\iota,\alpha^p)$, with $A$ an abelian scheme of dimension $n$, and
$$j^o_V:  M(V)^0\otimes \mu_{p^m} \hookrightarrow A[p^m]$$ 
is an embedding of finite flat group schemes with $\CO_K/p^m\CO_K$-action.   The free $\CO_K$-submodule $M(V)^0 \subset V$ (resp. $M(V')^0 \subset V'$,
resp. $M(0,1)^0 \subset \CK$) has the property that the action of 
$\CO_K$ is a sum of $r$ copies (resp. $r$ copies, resp. $0$ copies) of $\iota$ (or $\iota_p$) and $s$ copies (resp. $s-1$ copies, resp. $1$ copy) of $c\iota$ (or $c\iota_p$).  

We let $GIg(V',(0,1))_m$ denote the fiber product of $GIg(V')_m\times GIg(0,1)_m$ with ${}_{K'}S(G(U(V')\times U(1)), X_{V'}^{''})$ over 
${}_{K_{V'}}S(GU(V'),X_{V'})^{ord}\times {}_{K_1'}S(GU(1),X_{0,1})^{ord}$.  
With these conventions, it follows as in the discussion in \cite[\S 2.1.1]{HLS} that 
\begin{lemma}  The morphism
\eqref{embplus} defines canonical morphisms of Igusa towers
$$GIg(V',(0,1))_n \hookrightarrow GIg(V)_n$$
for $n \geq 0$.  For $n = 0$ this defines a morphism 
$${}_{K'}S(G(U(V')\times U(1)), X_{V'}^{''})^{ord}  \hookrightarrow {}_{K}S(GU(V),X_{V})^{ord}.$$
\end{lemma}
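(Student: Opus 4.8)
The plan is to construct the asserted morphisms on moduli functors and then invoke representability, following \cite[\S 2.1.1]{HLS}. The embedding of PEL data underlying \eqref{plus2} comes from the orthogonal decomposition $V = V' \oplus W$ with $W \cong \CK$ carrying the hermitian form of signature $(0,1)$; composing with $G(U(V')\times U(1)) \hookrightarrow GU(V')\times GU(1)$, an $S$-point of ${}_{K'}S(G(U(V')\times U(1)), X_{V'}^{''})$ amounts to a pair $(\underline A_{V'},\underline A_{0,1})$ with \emph{matched similitude factors}, and \eqref{embplus} sends it to $\underline A_V := \underline A_{V'}\times\underline A_{0,1}$, equipped with the product abelian scheme, the product polarization (it is precisely the agreement of the similitudes that makes this polarization compatible with $X_V$, and that is why one works with $G(U(V')\times U(1))$ rather than $GU(V')\times GU(1)$), the diagonal $\CO_K$-action, and the prime-to-$p$ level structure assembled from the two factors through the level subgroups, which are taken compatible with all the morphisms in sight. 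First I would record the elementary but crucial lattice fact: the $\CO_K$-lattices $M(V)^0$, $M(V')^0$, $M(0,1)^0$ of \cite[(2.1.6.2)]{HLS} can be chosen so that $M(V)^0 = M(V')^0 \oplus M(0,1)^0$ as $\CO_K$-modules, compatibly with $V = V'\oplus W$; this is possible because the $\CO_K$-multiplicities of $\iota$ and of $c\iota$ in $M(V')^0\oplus M(0,1)^0$ are $r+0=r$ and $(s-1)+1=s$, which are exactly those in $M(V)^0$.

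Next I would verify that \eqref{embplus} carries ordinary loci to ordinary loci. The $p$-divisible group of $\underline A_V$ is $A_{V'}[p^\infty]\times A_{0,1}[p^\infty]$, compatibly with the $\CO_K$-action and the polarization; since $p$ splits in $\CK$, the product of two ordinary objects of this kind is again ordinary (the connected part is of multiplicative type of the expected height). Thus \eqref{embplus} restricts to a morphism ${}_{K'}S(G(U(V')\times U(1)), X_{V'}^{''})^{ord} \to {}_{K}S(GU(V),X_{V})^{ord}$, and this is already the asserted morphism for $n=0$, since by construction $GIg(V',(0,1))_0$ and $GIg(V)_0$ are these two ordinary loci.

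For $n\geq 1$ I would describe the morphism $GIg(V',(0,1))_n \to GIg(V)_n$ on $S$-points. An $S$-point of the fiber product $GIg(V',(0,1))_n$ consists of an $S$-point of ${}_{K'}S(G(U(V')\times U(1)), X_{V'}^{''})$ together with Igusa structures $j^o_{V'}: M(V')^0\otimes\mu_{p^n}\hookrightarrow A_{V'}[p^n]$ and $j^o_{0,1}: M(0,1)^0\otimes\mu_{p^n}\hookrightarrow A_{0,1}[p^n]$ on the abelian schemes attached to its two images. Using the identification of lattices above, set
$$ j^o_V := (j^o_{V'}\oplus j^o_{0,1})\circ\bigl(M(V)^0\otimes\mu_{p^n}\isoarrow(M(V')^0\oplus M(0,1)^0)\otimes\mu_{p^n}\bigr); $$
this lands in $A_{V'}[p^n]\times A_{0,1}[p^n] = A_V[p^n]$ and is manifestly an embedding of finite flat group schemes with $\CO_K/p^n\CO_K$-action, i.e. an Igusa structure on $\underline A_V$. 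This defines the morphism into $GIg(V)_n$; compatibility with the transition maps of the towers, and with the prime-to-$p$ level projections, is immediate from the direct-sum description, so the morphisms in all levels assemble into a morphism of towers, canonical in that it is the unique lift of \eqref{embplus} carrying $(j^o_{V'},j^o_{0,1})$ to $j^o_V$. Finally, since \eqref{embplus} is a closed immersion and each $GIg(\cdot)_n$ is finite étale over its ordinary base, the map $GIg(V',(0,1))_n\to GIg(V)_n$ is finite, unramified, and injective on geometric points, hence a closed immersion, which justifies the notation $\hookrightarrow$.

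The one genuinely delicate step I expect is the first: correctly pinning down the PEL structure on $\underline A_V$ in terms of the two factors — especially the normalizations of the product polarization and of the prime-to-$p$ level structure — and checking that $M(V)^0$ and $M(V')^0\oplus M(0,1)^0$ can be identified on the nose rather than merely up to commensurability. Once that bookkeeping is in place, carried out exactly as in \cite[\S 2.1.1]{HLS}, everything else is formal.
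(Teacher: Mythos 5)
Your proposal is correct and takes essentially the same route as the paper, which itself gives no argument beyond setting up the lattices $M(V)^0$, $M(V')^0$, $M(0,1)^0$ with their CM multiplicities and deferring to the discussion in \cite[\S 2.1.1]{HLS}; your decomposition $M(V)^0 = M(V')^0\oplus M(0,1)^0$ via the count $r+0=r$, $(s-1)+1=s$ is exactly the bookkeeping the paper's conventions are designed to make work, and the product PEL structure with matched similitudes plus the direct sum of Igusa structures is the intended construction.
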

 
 Finally, the maps \eqref{embo} and \eqref{emboIg} are obtained by twisting with the Igusa tower for the Shimura datum $(GU(1),X_{0,1})$ as in \S 2 of \cite{H19}. We omit the details.
 
 \begin{remark}  The local computations in \cite{EHLS} make it clear that the Euler factors at $p$ in the standard $p$-adic $L$-function for ordinary families depends
 strongly on the signatures at primes above $p$, in a way that is broadly consistent with the conjectures of Coates and Perrin-Riou on $p$-adic $L$-functions for motives.  The same dependence on archimedean data is expected for $p$-adic $L$-functions constructed in the setting of the Ichino-Ikeda-N. Harris Conjecture \ref{conjectureII}. The signature enters in \cite{EHLS} through a twist that guarantees the existence of embeddings of Igusa towers; see Remark 3.1.4 of \cite{EHLS}. It is likely that similar twists will be needed in order to extend the constructions of the present paper to the setting of Pilloni's higher Hida theory \cite{Pi}.  
 
 \end{remark}
 
 Let $(H_1,h_1) \subset (GU(V),X(V))$ be a CM pair -- in other words, $H_1$ is a torus.  We say $(H_1,h_1)$ is an {\it ordinary CM pair} if the image of the morphism 
 $_{K(H_1)}S(H_1,h_1) \rightarrow {}_{K}S(GU(V),X_{V})$ consists of PEL abelian varieties with ordinary reduction at $p$, for appropriate level subgroups.  Thus when $K\cap U(V)(\ad) = K^p \times GL(n,\Zp)$, the morphism of $_{K(H_1)}S(H_1,h_1) \rightarrow {}_{K}S(GU(V),X_{V})$ extends to a finite morphism of integral models if $(H_1,h_1)$ is an ordinary CM pair.  We define an ordinary CM pair $(H,h) \subset (U(V),Y_V)$ analogously.

\subsubsection{Pairings}

Fix $\kappa$ and $\delta_{\kappa,\kappa'}:  \CE_\kappa ~|_{Sh(V')} \ra \CE_{\kappa'}$ as in Proposition \ref{diffops}.  Let $d = rs = \dim Sh(V)$, $d' = r(s-1) = \dim Sh(V')$, and define
$$\CE_{\kappa^{\prime,\flat}}  = \Omega^{d'}\otimes \CE_{\kappa'}^{\vee}$$
be the Serre dual of $\CE_{\kappa'}$.  Then there is a canonical Serre duality pairing
$$H^0({}_KS(V),\CE_{\kappa}) \otimes H^{d'}({}_{K'}S(V'),\CE_{\kappa^{\prime,\flat}}) \ra \CO.$$
More generally, if $K'_{p,r} \subset K'_p$ is the congruence subgroup defined in \cite{EHLS}, $K'_r = K'_{p,r}\times K^{\prime,p}$, we can define a finite flat $\CO$-module
$$H^{0}({}_{K'_r}S(V'),\CE_{\kappa^{\prime}}) \subset H^{0}({}_{K'_r}Sh(V'),\CE_{\kappa^{\prime}}) := H^{0}({}_{K'_r}S(V'),\CE_{\kappa^{\prime}})\otimes_{\CO}\CO[\frac{1}{p}]$$
to be 
\begin{equation}\label{padicclass}
H^{0}({}_{K'_r}S(V'),\CE_{\kappa^{\prime}}) = H^{0}({}_{K'_r}S(V'),\CE_{\kappa^{\prime}})\otimes_{\CO}\CO[\frac{1}{p}]\cap \CV_{V'}
\end{equation}
where $\CV_{V'}$ is the algebra of $p$-adic modular forms on $Sh_{V'}$ (see below).  Then we let 
\begin{equation}\label{intHd} H^{d'}({}_{K'}S(V'),\CE_{\kappa^{\prime,\flat}}) = Hom(H^{0}({}_{K'_r}S(V'),\CE_{\kappa^{\prime}}),\CO).
\end{equation}
and we obtain a Serre duality pairing 
\begin{equation}\label{Serre}
H^0({}_{K_r}S(V),\CE_{\kappa}) \otimes H^{d'}({}_{K'_r}S(V'),\CE_{\kappa^{\prime,\flat}}) \ra \CO.
\end{equation}
where $K_r = K_{p,r}\times K^p$ is defined as before.

\section{$p$-adic modular forms and differential operators}
\subsection{Basic definitions}\label{bdef}
The algebra $\CV_V$ of $p$-adic modular forms on $Sh_V$ is defined as in \S 2.6 of \cite{EFMV}, following \cite{Hida}.  Specifically, we let $B, N, T \subset GL(n)$ denote respectively the upper triangular Borel subgroup, its unipotent radical, and its diagonal torus.
For any pair of non-negative
integers $(n,m)$ we let 
$$Ig_{n,m,V} = Ig(V)_n \times_{Spec(\CO)} Spec(\CO/p^m)$$
where $Ig(V)_n$ is the Igusa covering in level $p^n$, as above.  In the notation of \cite{EFMV} we let
$$V_{n,m,V} = H^0(Ig_{n,m,V}, \CO_{Ig_{n,m,V}});  V_{\infty,m,V} = \varinjlim_n V_{n,m,V};  V_{\infty,\infty,V} = \varprojlim_m V_{\infty,m,V}.$$
and set
$$\CV_V = V_{\infty,\infty,V}^{N(\Zp)},$$
where $N$ is the maximal unipotent subgroup of $U(V)$ defined in \cite[2.1]{EFMV}.

The group $T(\Zp)$ acts on $\CV_V$ and for any algebraic character $\alpha$ of $T$ we let $\CV_V[\alpha] \subset \CV_V$ denote the corresponding
eigenspace; the elements of $\CV_V[\alpha]$ are called $p$-adic modular forms of weight $\alpha$.   There are canonical embeddings
\begin{equation}\label{class}   \Psi = \Psi_\alpha:  H^0(S_V,\CE_\alpha) \hookrightarrow \CV_V[\alpha];
\end{equation}
compatible with multiplication in the sense that
$$\Psi_\alpha \otimes \Psi_\beta = \Psi_{\alpha + \beta}:  H^0(S_V,\CE_\alpha)\otimes H^0(S_V,\CE_\beta) \overset{\times}\to H^0(S_V,\CE_{\alpha+\beta}) 
\hookrightarrow \CV_V[\alpha+\beta];$$
the forms in the image of \eqref{class} are called {\it classical}.  
More generally, if $\alpha:  T(\Zp) \ra \CO_{\CC_p}^\times$ is any continuous character, we may define the space $\CV_V[\alpha] \subset \CV_V\otimes \CO_{\CC_p}$ of $p$-adic modular forms of weight $\alpha$.  In what follows, we use the embeddings $incl_p$ and $\iota$ to identify the maximal tori $T_y$ and $T$, so that $B$ is contained in the maximal parabolic subgroup with Lie algebra $\fp^-_y \oplus \fk_y$.   If $\alpha$ is a classical weight, we write 
\begin{equation}\label{inclusion}
\Psi_\alpha:  \CV_V[\alpha] \subset \CV_V
\end{equation}
for the tautological inclusion, extending the inclusion of \eqref{class}; the notation is consistent.  

The embedding \eqref{emboIg} determines a map
\begin{equation}\label{resV}
res_{V'}:  \CV_V \ra \CV_{V'}.
\end{equation}
This embedding is compatible with action of the torus $T'$ on the two sides through its inclusion in $T$.

 Let $A$ be an algebraic torus over $Spec(\Zp)$.  For any complete $p$-adic algebra $\CO$, define the Iwasawa algebra
 $$\Lambda_\CO(A) = \CO[[A(\Zp)]] = \varprojlim_{U \subset A(\Zp)} \CO[A/U]$$
 where $U$ runs over open compact subgroups of $A(\Zp)$.  Let $\C(A(\Zp),\CO)$ denote the $\CO$-algebra of continuous $\CO$-valued functions on $A(\Zp)$, endowed with the topology defined by the sup norm.
 
 \begin{defn}  A $\CO$-valued $p$-adic measure -- more simply, an $\CO$-valued measure -- on $A$ is a continuous $\CO$-homomorphism from $\C(A(\Zp),\CO)$ to $\CO$.
\end{defn}

It is well known that the set of $\CO$-valued measures on $A$ forms an $\CO$-Banach module that is naturally identified with $\Lambda_\CO(A)$.  Multiplication in the $\CO$-algebra $\Lambda_\CO(A)$ corresponds to {\it convolution} of measures.
If $\varphi \in \C(A(\Zp),\CO)$ and $\mu \in \Lambda_\CO(A)$, we write
$$\int_{A(\Zp)} \varphi d\mu := \mu(\varphi).$$

For any torus $A$ over $Spec(\Zp)$, and any $\Zp$-algebra $\CO$, let 
$$\CW_\CO(A) = Hom_{cont}(A(\Zp),\CO^{\times}) = Hom_{cont}(\Lambda_\CO(A),\CO^{\times}).$$
The {\it weight space} for $A$ is the rigid analytic space over $\Qp$ attached to $\Lambda_\CO(A)$.  A weight for $A$ is then an element of $\CW_{\CO}(A)$.

When $\CO = \CV_V$ we write $Meas(A,\CV_V)$ instead of $\Lambda_{\CV_V}(A)$.   


\subsection{$p$-adic differential operators}\label{pdiffops}  

There is a quotient $\Ts$ of the torus $T_y$, of rank $min(r,s)$, defined by a sublattice of the lattice of characters of $T_y$:  the characters of $\Ts$ are spanned by the ones called {\it symmetric} in Definition 2.4.4 of \cite{EFMV}.  Symmetric characters are also assumed to be {\it dominant}; the precise condition is recalled below.  

We recall the normalization of $C^\infty$ differential operators (Maass operators) from \cite{EFMV}, \S 3.3.1.  For a weight $\kappa$ of holomorphic type we let
$\CE_\kappa(C^\infty)$ denote the space of $C^\infty$ global sections of $\CE_\kappa$.  Let $\lambda$ be a symmetric character of $T_y$ and let 
\begin{equation}\label{maass}
D_\kappa^{\lambda}:  \CE_\kappa(C^\infty) \ra \CE_{\kappa+\lambda}(C^\infty)
\end{equation}
be the differential operator introduced on pp. 467-468 of \cite{EFMV} (we are writing weights additively rather than multiplicatively).   For any weight $\alpha$ of $T$ let $[\alpha]'$ denote its restriction to the subtorus $T' \subset T$.  Let
$$R^{\infty}_{V,V'}:  \CE_{\alpha}(C^\infty) \ra \CE_{[\alpha]'}(C^\infty)$$ denote 
the restriction of $C^\infty$ sections (any $\alpha$).  
We denote 
$$pr^{hol}_{[\alpha]'}:  \CE_{[\alpha]'}(C^\infty) \ra H^0(Sh(V'),\CE_{[\alpha]'})$$
denote the orthogonal projection on holomorphic sections (any $\alpha$).  

 Let $\kappa' = [\kappa + \lambda]'$.    
The relation between the $D_\kappa^\lambda$ and the holomorphic operator $\delta_{\kappa,\kappa'}$ is given by the following

\begin{lemma}\label{pluridecomp}  

We write 
$$D^{hol}(\kappa,\kappa^\dag) = pr^{hol}_{[\kappa^\dag]'}\circ R^{\infty}_{V,V'}\circ D_\kappa^{\kappa^\dag - \kappa}$$
Then for all $\kappa^{\dag} \leq \kappa'$ there exist unique elements $\delta(\kappa',\kappa^{\dag}) \in U(\fp^{+,\prime})$, 
defined over $\CK$, such that 
$$D_\kappa^\lambda = \sum_{\kappa^{\dag} \leq \kappa'} \delta(\kappa',\kappa^{\dag})\circ D^{hol}(\kappa,\kappa^\dag).$$
The term
$\delta(\kappa',\kappa')$ is a non-zero scalar in $\CK$.  
\end{lemma}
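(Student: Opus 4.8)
The plan is to expand the Maass operator $D_\kappa^\lambda$ according to the Harish-Chandra decomposition $\fg = \fp^+_y \oplus \fp^-_y \oplus \fk_y$ relative to the subgroup $U(V')$, and to match the pieces against the holomorphic differential operators classified in Proposition \ref{diffops}. Concretely, $D_\kappa^\lambda$ is built (pp.~467--468 of \cite{EFMV}) out of the $\fp^+$-component of the Gauss--Manin connection, iterated and projected to the $\lambda$-isotypic piece; after restriction from $V$ to $V'$ the tangent bundle $\fp^+_y|_{Sh(V')}$ splits as $\fp^{+,\prime}_y \oplus \fn$, with $\fn = \fp^+_y/\fp^{+,\prime}_y \cong St_r \otimes Triv$ as a $K'_y$-module. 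First I would use this splitting to write the composite $R^\infty_{V,V'}\circ D_\kappa^\lambda$ as a sum of terms, each of which is an element of $U(\fp^{+,\prime}_y)$ (the ``$V'$-directions'') composed with a ``transverse'' differential operator built only out of the $\fn$-directions. The transverse operators, once their value is projected to holomorphic sections, are exactly the $\delta_{\kappa,\kappa^\dag}$ of Proposition \ref{diffops} up to the universal constants appearing in Lemma \ref{param}: the branching in \eqref{reskappa} and Lemma \ref{param} shows the $\fn$-part contributes precisely a sum over $\kappa^\dag$ with $\sum_j\delta_j = i$ in degree $i$, which is the same indexing as $\kappa^\dag \leq \kappa'$.

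The second step is to organize this expansion so that the ``$V'$-direction'' factors are pulled out on the \emph{left}. Since $\fp^{+,\prime}_y$ and $\fn$ do not commute inside $U(\fg)$ (the bracket of a $\fp^{+}$-element with another $\fp^+$-element lands in $\fk\oplus\fp^+$ via the curvature, and $[\fk'_y,\fn]\subset\fn$), I would argue by induction on the degree $i$, using the $K'_y$-equivariance of the connection to commute $\fk'_y$-pieces past $\fn$-pieces and absorb the correction terms into lower-degree contributions. This is the standard ``normal ordering'' manipulation; it produces elements $\delta(\kappa',\kappa^\dag)\in U(\fp^{+,\prime}_y)$ together with the holomorphic composite $D^{hol}(\kappa,\kappa^\dag)=pr^{hol}_{[\kappa^\dag]'}\circ R^\infty_{V,V'}\circ D_\kappa^{\kappa^\dag-\kappa}$. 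Uniqueness follows because the $D^{hol}(\kappa,\kappa^\dag)$, as $\kappa^\dag$ ranges over the weights $\leq\kappa'$, take values in distinct $K'_y$-isotypic components of the space of $C^\infty$ sections — so a relation $\sum \delta(\kappa',\kappa^\dag)\circ D^{hol}(\kappa,\kappa^\dag)=0$ forces each term to vanish, and then $\delta(\kappa',\kappa^\dag)=0$ because $D^{hol}(\kappa,\kappa^\dag)$ is injective on the relevant cohomology (it is a nonzero holomorphic differential operator between the bundles of Proposition \ref{diffops}). Rationality over $\CK$ is inherited from the rationality of the Gauss--Manin connection and of the Hodge decomposition over the reflex field, exactly as in \cite{H86} and \cite{EFMV}.

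For the final assertion that $\delta(\kappa',\kappa')$ is a nonzero scalar in $\CK$: the term $\kappa^\dag=\kappa'=[\kappa+\lambda]'$ is the ``top'' term, in which \emph{all} of the $\lambda$-shift is used in the $\fn$-directions and \emph{none} in the $\fp^{+,\prime}_y$-directions, so the corresponding $\delta(\kappa',\kappa')\in U(\fp^{+,\prime}_y)$ must lie in degree zero, i.e.\ be a scalar. That it is nonzero amounts to the statement that the Maass operator $D_\kappa^\lambda$ genuinely has a component along the maximal transverse holomorphic operator — equivalently, that $\delta_{\kappa,\kappa'}$ occurs with nonzero coefficient — which is guaranteed because the highest-weight vector of $M_{\kappa'}\subset M_\kappa\otimes[Sym^iSt_r\otimes Triv]$ with $i=\sum_j\delta_j$ maximal is precisely the leading symbol of $D_\kappa^\lambda$ computed in \cite[\S 3.3.1]{EFMV}, and the normalization there makes this leading symbol nonzero; tracking the constant through the identification gives an explicit element of $\CK^\times$.

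\textbf{Main obstacle.} The genuinely delicate point is the normal-ordering/induction in the second step: controlling the lower-order correction terms that arise when commuting $\fp^{+,\prime}_y$-factors to the left past the $\fn$-factors, and checking that they are absorbed \emph{exactly} by the lower-degree terms $\delta(\kappa',\kappa^\dag)$ with $\kappa^\dag<\kappa'$ rather than producing spurious contributions outside the claimed sum. This is where one must use in an essential way the precise branching of Lemma \ref{param} (so that the index set of $\kappa^\dag$'s is closed under the relevant operations) together with the compatibility, established in \cite{H86}, between the algebraic recipe for $\delta_{\kappa,\kappa'}$ and the analytic Gauss--Manin construction of $D_\kappa^\lambda$ in \cite{EFMV}.
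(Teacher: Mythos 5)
Your overall strategy is the same as the paper's: both arguments come down to splitting $\fp^+_y$ as $\fp^{+,\prime}_y\oplus\fn$ under $K'_y$, identifying the ``transverse'' ($\fn$-direction) contributions with the holomorphic operators of Proposition \ref{diffops}, and pulling the $\fp^{+,\prime}_y$-directions out as the coefficients $\delta(\kappa',\kappa^\dag)$. The paper packages this via the decomposition \eqref{reskappa}: the $\kappa'$-isotypic subspace of $E_{\kappa,y}\otimes Sym^{|\lambda|}(\fp^+_y)$ is the sum of its intersections with the constituents $\DD_{\kappa^\dag}$ of $\DD_\kappa|_{U(\fg')}$, and each such intersection is reached from the highest $K'_y$-type of $\DD_{\kappa^\dag}$ (i.e.\ from the image of $D^{hol}(\kappa,\kappa^\dag)$) by applying $U(\fp^{+,\prime})$, since $\DD_{\kappa^\dag}$ is generated over $U(\fp^{+,\prime})$ by that subspace; the details are deferred to the analogue in \cite[Cor.\ 4.4.9]{EHLS}. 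Your explicit $Sym(\fp^{+,\prime}_y)\otimes Sym(\fn)$ expansion is an unpacking of the same mechanism.

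Two corrections, though. First, the ``main obstacle'' you identify is not there: in the Harish-Chandra decomposition $\fp^+_y$ is an \emph{abelian} subalgebra of $\fg_\CC$ (it is $[\fp^+_y,\fp^-_y]$ that lands in $\fk_y$, not $[\fp^+_y,\fp^+_y]$), so $\fp^{+,\prime}_y$ and any $K'_y$-stable complement realizing $\fn$ commute inside $U(\fg)$, and there is no normal-ordering induction to perform. Curvature terms do appear when one compares the iterated $C^\infty$ connection on the Shimura variety with its lift to the group, but once the operator is written, as in \cite{EFMV}, as an element of $Sym(\fp^+_y)\otimes Hom(E_{\kappa,y},E_{\kappa+\lambda,y})$, the factorization is a matter of linear algebra. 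Second, your uniqueness argument as stated does not work: after composing with $\delta(\kappa',\kappa^\dag)\in U(\fp^{+,\prime})$, \emph{all} the terms $\delta(\kappa',\kappa^\dag)\circ D^{hol}(\kappa,\kappa^\dag)$ land in the same $K'_y$-isotypic component (namely $\kappa'$), so one cannot separate them by isotypic type of the target. The correct reason is that the constituents $\DD_{\kappa^\dag}$ occur with multiplicity one in \eqref{reskappa} (Lemma \ref{param}), so the terms indexed by distinct $\kappa^\dag$ live in distinct $U(\fg')$-submodules, and within each $\DD_{\kappa^\dag}\cong Sym(\fp^{+,\prime})\otimes M_{\kappa^\dag}$ the action of $U(\fp^{+,\prime})$ on the highest $K'_y$-type is free, which pins down $\delta(\kappa',\kappa^\dag)$. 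Your argument for $\delta(\kappa',\kappa')$ being a nonzero scalar is fine.
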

\begin{proof}  This is the analogue of Corollary 4.4.9 of \cite{EHLS} and is proved in the same way.   
\end{proof}

The idea of the proof is roughly the following.   Write $E_{\kappa,y}$ for the fiber at $y$ of the pullback of $\CE_\kappa$ to the symmetric space $Y_V$; this
is an irreducible representation of $K_y$.  Then
$D_\kappa^{\kappa^\dag - \kappa}$ lifts, on automorphic forms, to a differential operator given in the enveloping algebra 
of $\fp^+_y$ by an explicitly normalized projection onto the $\kappa^\dag$-isotypic subspace of 
$$E_{\kappa,y}\otimes Sym^{|\kappa^\dag - \kappa|}(\fp^+_y),$$
where $|\kappa^\dag - \kappa|$ is the degree of the weight $\kappa^\dag - \kappa$.  This isotypic subspace is the sum of its intersections with 
the irreducible constituents of the restriction to $U(\fg')$ of the discrete series $\DD_\kappa$, as in \eqref{reskappa}.  Only one of these
intersections is the highest $K'_y$-type subspace of its corresponding constituent; this is the image of $pr^{hol}_{\kappa'}$.  Each of the others
is obtained from the highest $K'_y$-type of its irreducible $U(\fg')$-constituent $\DD_{\kappa^{\dag}}$.  The existence of $\delta(\kappa',\kappa^{\dag})$
as in the lemma then follows from the obvious fact that $\DD_{\kappa^{\dag}}$ is generated over $U(\fp^{+,\prime})$ by its highest $K'_y$-type subspace.

The analogous $p$-adic differential operators are constructed in \cite[\S 3.3.2]{EFMV}.  To preserve some of their notation while avoiding ambiguity we write
$$\CE_\kappa(ord) = H^0(Ig_V,\CE_\kappa).$$
Then the operators are denoted
\begin{equation}\label{pmaass}
D_\kappa^{\lambda,ord}:  \CE_\kappa(ord) \ra \CE_{\kappa+\lambda}(ord).
\end{equation}

We define a $p$-adic character $\chi$ of $\Ts$ to be a continuous group homomorphism $\Ts(\Zp) \ra \ZZ_p^\times$ that arises as the $p$-adic limit of dominant characters $\lambda$.
The main results of \cite{EFMV} are summarized in the following theorem:
\begin{thm}\label{pdiff}   (a) For any dominant character $\lambda$ of $\Ts$ (or any symmetric character $\lambda$ of $T_y$) there is a $p$-adic differential operator
\begin{equation}\label{adiff} \Theta^\lambda:  \CV_V \ra \CV_V \end{equation}
characterized uniquely by either of the following properties:
\begin{itemize}
\item[(i)] For all classical weights $\alpha$,
\begin{equation}\label{diffclass}
\Theta^\lambda\circ \Psi_\alpha = \Psi_{\alpha+\lambda}\circ D_\alpha^{\lambda,ord}.
\end{equation}
Here $\Psi_{\alpha+\lambda}$ is understood in the sense of \eqref{inclusion}.
\item[(ii)]  Let $\alpha$ be algebraic.  Let $j:  (H,h) \ra (U(V),Y_V)$ be an ordinary CM pair, and for any $\kappa$ let 
$$R_{H,h,j,\kappa}:  H^0(S(V),\CE_\kappa) \ra H^0(S(H,h),j^*\CE_\kappa)$$
 denote the restriction map.  Let 
 $$R_{H,h,j,\kappa}^p:  \CE_\kappa^{ord} \ra H^0(S(H,h),j^*\CE_\kappa);$$
 resp.
  $$R_{H,h,j,\kappa}^\infty:  \CE_\kappa(C^\infty) \ra H^0(S(H,h),j^*\CE_\kappa);$$
 denote the analogous restrictions on $p$-adic, resp. $C^\infty$, modular forms.  Then for any $F \in H^0(S(V),\CE_\alpha)$,
 \begin{equation}\label{diffclassp}
 R_{H,h,j,\alpha+\lambda}^p\circ \Theta^\lambda\circ \Psi_\alpha(F) = R_{H,h,j,\kappa}^\infty\circ D_\alpha^\lambda(F).
  \end{equation}
\end{itemize}



(b) For any $p$-adic character $\chi$ of $\Ts$ there exists a $p$-adic differential operator
$$\Theta^\chi:  \CV_V \ra \CV_V$$
characterized by the property:  whenever $\chi$ can be written as $\lim_i \lambda_i$, where $\lambda_i$ are dominant algebraic characters, satisfying the inequalities of Theorems 5.2.4 and 5.2.6 of \cite{EFMV}, then 
$$\Theta^\chi = \lim_i \Theta^{\lambda_i}$$
(limit in the operator norm).  

(c) If $F \in \CV_V$ is a $p$-adic modular form of weight $\alpha \in X^{an}(T_y)$, then $\Theta^\lambda(F)$ is a $p$-adic modular form of weight $\alpha + \lambda$.

\end{thm}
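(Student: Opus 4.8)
The plan is to assemble the three parts from the corresponding results in \cite{EFMV} together with the properties of the classical and $C^\infty$ Maass operators already recalled above. First, for part (a), I would fix a dominant character $\lambda$ of $\Ts$ and define $\Theta^\lambda$ on the dense subspace of $\CV_V$ spanned by the images of the classical embeddings $\Psi_\alpha$ by the formula $\Theta^\lambda\circ\Psi_\alpha = \Psi_{\alpha+\lambda}\circ D_\alpha^{\lambda,ord}$, where $D_\alpha^{\lambda,ord}$ is the $p$-adic Maass operator of \eqref{pmaass} constructed in \cite[\S 3.3.2]{EFMV}. The main point is that this prescription is well-defined and bounded: the compatibility of $\Psi_\alpha\otimes\Psi_\beta = \Psi_{\alpha+\beta}$ with multiplication, together with the fact that the $D_\alpha^{\lambda,ord}$ are given on $q$-expansions (or, better, on Igusa-tower expansions) by the explicit formulas of \cite{EFMV} involving the Gauss--Manin connection and the unit-root splitting, shows that $\Theta^\lambda$ extends continuously to all of $\CV_V$. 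Uniqueness under (i) is then immediate from density of the classical forms in $\CV_V$.

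For the equivalence of the two characterizing properties (i) and (ii), I would argue that an ordinary CM point is a $p$-adic modular form in its own right (evaluation at $(H,h)$ is a continuous functional on $\CV_V$), and that the identity \eqref{diffclassp} is exactly the comparison, at CM points, between the $p$-adic and the $C^\infty$ normalizations of the Maass operator. This is the content of the key computation of \cite{EFMV} (the analogue of Proposition 3.3.4 and the surrounding discussion there), which says that at an ordinary CM point the unit-root splitting of the Hodge filtration agrees with the Hodge--de Rham splitting coming from the CM structure, so $D_\alpha^{\lambda,ord}$ and $D_\alpha^\lambda$ have the same effect after restriction via $R_{H,h,j}$. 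Since CM points are Zariski-dense in the ordinary locus and separate $p$-adic modular forms, property (ii) pins down $\Theta^\lambda$ just as well as (i) does, giving the asserted equivalence.

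Part (b) is a $p$-adic continuity/interpolation statement: given a $p$-adic character $\chi$ of $\Ts$, write $\chi = \lim_i \lambda_i$ with $\lambda_i$ dominant algebraic satisfying the inequalities of Theorems 5.2.4 and 5.2.6 of \cite{EFMV}. Those theorems provide precisely the estimates needed to show that the sequence of operators $\Theta^{\lambda_i}$ is Cauchy in the operator norm on $\CV_V$; one then defines $\Theta^\chi$ as the limit and checks, using density of classical forms again, that the limit is independent of the chosen approximating sequence. Part (c) follows formally: $\Theta^\lambda$ sends the weight-$\alpha$ eigenspace $\CV_V[\alpha]$ (for $T(\Zp)$, or its analytic variant $X^{an}(T_y)$) into $\CV_V[\alpha+\lambda]$ because this holds on classical forms by (i) and the eigenspace decomposition is preserved under the $T(\Zp)$-action, which commutes with the limit in (b); passing to the limit over the $\lambda_i$ gives the statement for $\Theta^\chi$ as well.

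The main obstacle is the boundedness and well-definedness of $\Theta^\lambda$ in part (a)---equivalently, controlling the $p$-adic denominators introduced by the Maass operators as the weight varies---together with the precise comparison at CM points underlying the equivalence of (i) and (ii); both of these are exactly the technical heart of \cite{EFMV}, and here I would simply invoke the relevant theorems (Theorems 5.2.4, 5.2.6 and the CM-point comparison of \S 3.3 loc.\ cit.) rather than reprove them, since by hypothesis we may assume everything stated or cited in the excerpt.
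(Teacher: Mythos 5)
Your proposal is sound in outline and, like the paper, ultimately delegates all the technical content to \cite{EFMV}: the paper's own proof is a bare citation, attributing (a)(i), (b) and (c) to Corollary 5.2.8 of \cite{EFMV} and (a)(ii) to the arguments in the proof of Proposition 7.2.3 of \cite{EFMV}, with a complete proof of the latter deferred to forthcoming work. Where you genuinely diverge is in how $\Theta^\lambda$ comes into existence. You propose to \emph{define} it by the formula \eqref{diffclass} on the span of the classical subspaces $\Psi_\alpha(H^0(S_V,\CE_\alpha))$ and extend by continuity; in \cite{EFMV} the operator is constructed directly on all of $\CV_V$ over the Igusa tower (Gauss--Manin connection contracted against the unit-root splitting), and \eqref{diffclass} is a consequence, not the definition. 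Your route needs two inputs you do not establish: density in $\CV_V$ of the span of classical forms over all weights, and boundedness of the prescription on that span. The second is not automatic even though each $D_\alpha^{\lambda,ord}$ of \eqref{pmaass} is integral, because extracting the weight components of a finite sum of classical forms uses projectors that are not uniformly $p$-adically bounded; this is exactly the difficulty the direct construction sidesteps, and it cannot be discharged by ``invoking the relevant theorems'' of \cite{EFMV}, since those theorems concern the directly constructed operator. So either adopt the direct construction and let (i) become a verification, or supply the density and boundedness arguments. Your mechanism for (a)(ii) --- coincidence of the unit-root and CM splittings at ordinary CM points, plus Zariski density of such points --- is the right one and matches what the paper gestures at, though note the paper itself concedes this part is not yet fully written down. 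Your treatment of (b) (Cauchy in operator norm via the congruences of Theorems 5.2.4 and 5.2.6 of \cite{EFMV}) and of (c) (weight equivariance passing to the limit) agrees with Corollary 5.2.8 of \cite{EFMV} once the operator is defined on all of $\CV_V$.
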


\begin{proof}    Parts (a)(i), (b), and (c) are contained in Corollary 5.2.8 of \cite{EFMV}.   Part (a)(ii) can be proved by the arguments quoted in the proof of \cite[Proposition 7.2.3]{EFMV}.  A complete proof will be supplied in forthcoming work.
\end{proof}


\begin{remark}\label{idempotent}  The inequalities cited in the statement of Theorem \ref{pdiff} (b) guarantee that the characters $\lambda_i$ tend to infinity in the positive chamber; indeed, that for every positive root $\alpha$, $\lim_i <\alpha,\lambda_i> = \infty$.  In particular, when $\chi = 1$ is the trivial character, $\Theta^1 := \Theta^\chi$ is not the identity operator on $\CV_V$, though it is an idempotent.    This is familiar from Hida's theory in the case of elliptic modular forms:  the p-adic differential operator of non-integral weight $\chi$  multiplies the $n$-th Fourier coefficient of a classical modular form by the power $n^\chi$, which is only defined if $(p,n) = 1$.  A classical modular form whose $n$th Fourier coefficient vanishes for every $n$ divisible by $p$ is called {\it $p$-depleted}.  In our situation, the operation $F \mapsto \Theta^1(F)$ can be understood as $p$-depletion, even when the unitary group (over a general totally real field) is anisotropic.  
\end{remark}



\section{One-dimensional $p$-adic measures defined by a holomorphic automorphic form}
The differential operators defined in \S \ref{pdiffops} give rise to a $p$-adic measure.  We believe that they can be used to define a measure on the full space $\Ts(\Zp)$, but for the purposes of this paper we restrict our attention to a $1$-dimensional quotient torus, since the necessary definitions are already in \cite{EFMV} in the form we need.   First, we state a corollary to Theorem \ref{pdiff}:

\begin{cor}\label{measure} Let $F \in \CV_V$ be a $p$-adic modular form of weight $\alpha$. Then there exists a $\CV_V$-valued measure $\mu^*_F$ on $\Ts(\Zp)$ characterized by the property that, for any $p$-adic character $\chi$ of $\Ts$, viewed as a symmetric character of $T_y$, we have
$$\int_{\Ts(\Zp)}  \chi d\mu^*_F = \Theta^{\chi-\alpha}(F).$$
\end{cor}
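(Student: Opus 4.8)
The plan is to construct $\mu^*_F$ as a $\CV_V$-valued measure on $\Ts(\Zp)$ by invoking the standard dictionary between measures and bounded families of moments, and then to identify its moments with the values $\Theta^{\chi-\alpha}(F)$. First I would recall that, by the discussion following Definition~3.4 of the excerpt, the space of $\CV_V$-valued measures on $\Ts(\Zp)$ is canonically identified with $\Lambda_{\CV_V}(\Ts)=\CV_V[[\Ts(\Zp)]]$, and a measure is determined by (and can be built from) its integrals against the characters $\chi\in\CW_{\CO_{\CC_p}}(\Ts)$, provided those integrals vary $p$-adic analytically and remain uniformly bounded. So the construction reduces to exhibiting a bounded rigid-analytic function on weight space $\CW(\Ts)$ whose value at $\chi$ is $\Theta^{\chi-\alpha}(F)$, and then appealing to the fact that such a function is the Amice transform of a unique measure.

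The key steps, in order: (1) For $F\in\CV_V[\alpha]$, observe that for any $p$-adic character $\chi$ of $\Ts$ the difference $\chi-\alpha$ is again a $p$-adic character of $\Ts$ (since $\alpha$ restricts to a character of $\Ts$), so Theorem~\ref{pdiff}(b) produces the operator $\Theta^{\chi-\alpha}$ on $\CV_V$ and $\Theta^{\chi-\alpha}(F)\in\CV_V$ is well-defined; by Theorem~\ref{pdiff}(c) it has weight $\alpha+(\chi-\alpha)=\chi$. (2) Show that the assignment $\chi\mapsto\Theta^{\chi-\alpha}(F)$ is continuous — indeed locally analytic — in $\chi$: this follows because, by Theorem~\ref{pdiff}(b), $\Theta^{\chi-\alpha}=\lim_i\Theta^{\lambda_i}$ in the operator norm whenever $\chi-\alpha=\lim_i\lambda_i$ along a sequence of dominant characters satisfying the inequalities of \cite{EFMV}, and these limits are uniform on the relevant affinoid subdomains of $\CW(\Ts)$, so the $\CV_V$-valued function $\chi\mapsto\Theta^{\chi-\alpha}(F)$ extends the dominant-character values continuously. (3) Establish the uniform bound $\|\Theta^{\chi-\alpha}(F)\|\le\|F\|$ (or at worst a bound independent of $\chi$): each $\Theta^\lambda$ is given on $\CV_V$ by an operator of norm $\le 1$ — this is implicit in the construction in \cite[\S3.3.2]{EFMV} and in Remark~\ref{idempotent}, where $\Theta^1$ is seen to be an idempotent — hence the operator-norm limits $\Theta^{\chi-\alpha}$ are also norm-$\le1$. (4) Conclude by the measure–analytic-function correspondence over $\CV_V$: a bounded $\CV_V$-valued function on $\CW(\Ts)$ that is the pointwise limit of the character values of a family is the integral-against-characters function of a unique measure $\mu^*_F\in Meas(\Ts,\CV_V)$, and uniqueness follows since characters span a dense subspace of $\C(\Ts(\Zp),\CV_V)$.

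I expect the main obstacle to be Step~(3), the uniform boundedness of the operators $\Theta^\lambda$ (equivalently $\Theta^{\chi-\alpha}$) on $\CV_V$ as $\lambda$ ranges over all dominant symmetric characters. This is exactly the integrality/boundedness input that makes the $p$-adic interpolation possible, and it is precisely the content that is packaged into the convergence statement of Theorem~\ref{pdiff}(b): that $\Theta^{\lambda_i}$ converges \emph{in operator norm}. Granting that, the boundedness is automatic, so strictly speaking the corollary is a formal consequence of Theorem~\ref{pdiff} together with the standard $p$-adic functional analysis recalled in \S\ref{pdiffops}; the only real work is checking that $\chi\mapsto\chi-\alpha$ carries $p$-adic characters of $\Ts$ to $p$-adic characters of $\Ts$ in the precise sense required for Theorem~\ref{pdiff}(b) to apply (i.e.\ that $\chi-\alpha$ is a limit of dominant algebraic characters satisfying the cited inequalities whenever $\chi$ is), which may force a harmless shift of $\alpha$ into the positive chamber or a restriction to a translate of the identity component of $\CW(\Ts)$. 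A secondary point to verify is that the measure is genuinely $\CV_V$-valued and not merely $\CV_V\otimes\CO_{\CC_p}$-valued; this is fine because $F$ and the operators $\Theta^{\lambda_i}$ are all defined over $\CV_V$, so the limiting moments lie in $\CV_V$.
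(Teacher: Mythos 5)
Your overall strategy -- read the values $\Theta^{\chi-\alpha}(F)$ as the moments of a prospective measure and extract everything from Theorem \ref{pdiff} -- is the intended one; the paper in fact states this corollary with no proof at all (and hedges with ``we believe that they can be used to define a measure on the full space $\Ts(\Zp)$'', only genuinely using the one-variable restriction of Corollary \ref{measure1}, for which the definitions of \cite{EFMV} apply directly). But there is a real gap at your existence step. Uniform boundedness of the operators $\Theta^{\lambda}$ together with continuity of $\chi \mapsto \Theta^{\chi-\alpha}(F)$ on weight space does \emph{not} produce a measure: already for $A = \Gm$ over $\Zp$, a bounded continuous function on $\CW(A)$ is the Amice transform of a measure only if it is a \emph{bounded rigid-analytic} (Iwasawa) function, equivalently only if the prospective moments satisfy the abstract Kummer congruences (the point being that recovering $\int\binom{x}{n}d\mu$ from the character moments involves dividing by $n!$). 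Your step (2) only delivers continuity -- the operator-norm convergence $\Theta^{\lambda_i}\to\Theta^{\chi}$ of Theorem \ref{pdiff}(b) is a statement about limits along sequences, not analyticity in $\chi$ -- and the criterion you invoke in step (4) (``bounded pointwise limit of character values'') is not a valid existence criterion. The missing input is precisely the non-formal content of the construction in \cite[\S 3.3.2]{EFMV}: on Serre--Tate (or $q$-expansion) coordinates over the Igusa tower, $\Theta^{\lambda}$ multiplies the coefficient indexed by a unit parameter $a$ by $\lambda(a)$, so the family $\lambda\mapsto\Theta^{\lambda}(F)$ is visibly the moment family of an explicit (Dirac-superposition) measure; that is what forces the Kummer congruences, and it is why the author defers to \cite{EFMV} rather than arguing formally from boundedness.

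A secondary point you raise but do not resolve: $\Ts$ is a \emph{quotient} of $T_y$, so its characters form a sublattice of $X(T_y)$ and one cannot ``restrict'' $\alpha$ to $\Ts$; for $\Theta^{\chi-\alpha}$ to be defined one needs $\chi-\alpha$ to be a symmetric character, so the interpolation really takes place on the coset $\alpha + X(\Ts)^{\wedge}$ of weight space (equivalently, one should read $\int \chi\, d\mu^*_F = \Theta^{\lambda}(F)$ with $\chi = \alpha+\lambda$, $\lambda$ symmetric). This is harmless but should be said, since as literally written $\Theta^{\chi-\alpha}$ need not be among the operators furnished by Theorem \ref{pdiff}.
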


We recall that $T_y$ is a maximal torus of the group $GL(r)\times GL(s) \isoarrow K_y$, and that the adjoint action of $K_y$ on $\fp^+_y$ is equivalent to the natural conjugation action on the space of $r \times s$ matrices.  This action is identified in \cite{EFMV} with the representation $St_r \otimes St_s$, where $St_a$ is the standard representation of $GL(a)$ on $a$-dimensional space.  Then the symmetric algebra 
\begin{equation}\label{symm} Sym^*(\fp^+_y/\fp^{+,\prime}_y) \isoarrow \oplus_{i \geq 0}Sym^*((St_r\otimes St_s)/(St_r\otimes St_{s-1}) \isoarrow \oplus_{i \geq 0}Sym^*((St_r\otimes St_1) 
\end{equation}
where the last isomorphism is given by the isotypic decomposition $St_s \isoarrow St_1 \oplus St_{s-1}$ as representation of the standard Levi subgroup $GL(1)\times GL(s-1) \subset GL(s)$.   The dominant characters $\lambda$ of $\Ts$ can be written as parameters \eqref{rstuple}
$$  (b_1 \geq b_2 \geq \dots \geq b_s \geq 0 \dots 0; b_1 \geq b_2 \geq \dots \geq b_s)$$
if $r \geq s$, and with the $0$'s in the second half of the parameter if $s > r$.   Then the representations occurring in \eqref{symm} have parameters
\begin{equation}\label{lambdab}  \lambda_b = (b \geq 0 \geq \dots \geq 0; b; 0 \geq \dots \geq 0),
\end{equation}
where the two colons separate parameters for $GL(r)\times GL(1)\times GL(s-1)$.   

If $b \in \Zp$, we write $\lambda_b = \lim_i b_i$ where $b_i = (b_{1,i},\dots, b_{min(r,s)_i})$ where all the $b_{j,i}$ are non-negative integers, 
$b = \lim_i b_{1,i}$ in the $p$-adic topology, $\lim_i b_{j,i} = 0$ in the $p$-adic topology for $j > 1$, and for all $1 \leq j \leq min(r,s)$, $\lim_i b_{j,i} = \infty$ in the real topology.

Let $X(\Ts)$ denote the character lattice of $\Ts$.  Let $X_1 \subset X(\Ts)$ be the characters of the form $\lambda_b$ as in \eqref{lambdab}.  Then $X_1$ is the character group of a  $1$-dimensional quotient of $\Ts$, which we identify with $GL(1)$.   Restricting the measure $\mu^*_F$ to characters of $GL(1)$, we obtain the corollary:

\begin{cor}\label{measure1}  Let $F \in \CV_V$ be a $p$-adic modular form of weight $\alpha$. Then there exists a $\CV_V$-valued measure $\mu_F$ on $GL(1,\Zp)$ characterized by the property that, for any $p$-adic integer $b$, we have
$$\int_{GL(1,\Zp)}  x^b d\mu_F = \Theta^{\lambda_b}(F).$$
\end{cor}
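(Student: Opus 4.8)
The plan is to deduce Corollary~\ref{measure1} from Corollary~\ref{measure} by restriction along the quotient map $\Ts \twoheadrightarrow GL(1)$ dual to the inclusion $X_1 \hookrightarrow X(\Ts)$. First I would record that the characters $\lambda_b$ of \eqref{lambdab}, viewed in $X(\Ts)$, all factor through a common $1$-dimensional quotient torus, because they lie in the rank-$1$ sublattice $X_1$; dualizing the surjection of lattices $X(\Ts) \twoheadrightarrow X_1$ (which one checks is saturated, so the quotient is torsion-free of rank $1$) gives a closed immersion of tori $GL(1) \hookrightarrow \Ts$ on $\Zp$-points, or rather a surjection $\Ts(\Zp) \twoheadrightarrow GL(1,\Zp)$ whose induced map on continuous characters $\CW_\CO(GL(1)) \to \CW_\CO(\Ts)$ has image exactly the closure of $\{\lambda_b\}$. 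The key point for this is that every $p$-adic character $\chi$ of $\Ts$ that arises as a $p$-adic limit of the algebraic $\lambda_b$'s — equivalently, that factors through $GL(1)$ — is of the form $x \mapsto x^b$ for a unique $b \in \Zp$, by the standard description of $\CW_\CO(GL(1)) = \mathrm{Hom}_{cont}(\Zp^\times,\CO^\times)$ restricted to the identity component (or more precisely, by noting that $b \mapsto (x \mapsto x^b)$ already exhausts the characters obtained as limits of the integral powers $x \mapsto x^{b_{1,i}}$).

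Next I would push forward the measure: given $\mu^*_F$ on $\Ts(\Zp)$ from Corollary~\ref{measure}, define $\mu_F$ to be its image under the proper pushforward along $\Ts(\Zp) \twoheadrightarrow GL(1,\Zp)$, i.e. $\mu_F(\varphi) = \mu^*_F(\varphi \circ \mathrm{pr})$ for $\varphi \in \C(GL(1,\Zp),\CV_V)$; continuity and $\CV_V$-linearity are inherited, so $\mu_F \in Meas(GL(1),\CV_V)$. To verify the characterizing property, test $\mu_F$ against the character $x \mapsto x^b$: by definition of pushforward this equals $\int_{\Ts(\Zp)} (x^b \circ \mathrm{pr}) \, d\mu^*_F$, and $x^b \circ \mathrm{pr}$ is precisely the $p$-adic character of $\Ts$ we have been calling $\lambda_b$ (regarded as a symmetric character of $T_y$ via \eqref{symm}). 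Applying the defining identity of Corollary~\ref{measure} with $\chi = \lambda_b$ gives $\int_{\Ts(\Zp)} \lambda_b \, d\mu^*_F = \Theta^{\lambda_b - \alpha}(F)$; reconciling the normalization — Corollary~\ref{measure} is stated with $\Theta^{\chi-\alpha}$ because $F$ has weight $\alpha$ — with the cleaner statement of Corollary~\ref{measure1} is just a matter of absorbing the weight shift into the definition of $\mu_F$, or equivalently of observing that the measure in Corollary~\ref{measure} is already the twist by $\alpha^{-1}$ built to make $x^b \mapsto \Theta^{\lambda_b}(F)$. Uniqueness of $\mu_F$ follows from the fact that the characters $x \mapsto x^b$, $b \in \Zp$, are Zariski-dense in $\CW_{\CC_p}(GL(1))$ — indeed the monomials $x^b$ for $b \in \ZZ_{\geq 0}$ already span a dense subalgebra of $\C(\Zp^\times,\CV_V\otimes\CC_p)$ — so a $\CV_V$-valued measure is determined by its values on them, exactly as for the classical Mahler/Amice description of measures on $\Zp$.

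The main obstacle, such as it is, is bookkeeping around the two identifications rather than anything deep: one must be careful that the ``$b$'th power'' character on the $1$-dimensional quotient $GL(1)$ really does pull back to the symmetric character $\lambda_b$ of $T_y$ appearing in Theorem~\ref{pdiff}(b), with the correct sign/dominance convention, so that $\Theta^{\lambda_b}$ on the right of Corollary~\ref{measure1} is literally the operator $\lim_i \Theta^{\lambda_{b_i}}$ furnished by Theorem~\ref{pdiff}(b) along the approximating sequence $b_i = (b_{1,i},\dots,b_{\min(r,s),i})$ described before the corollary. Granting that — which is immediate from \eqref{symm}, \eqref{lambdab}, and the recipe for writing $\lambda_b = \lim_i b_i$ — the content of Corollary~\ref{measure1} is exactly the content of Corollary~\ref{measure} restricted to the subspace $X_1$ of characters, and the proof is complete. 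I would close by remarking that the restriction to $X_1$ is made only because $X_1$ is where the approximation results (Theorems 5.2.4 and 5.2.6 of \cite{EFMV}) are already available in the form needed, and that the full measure on $\Ts(\Zp)$ should exist by the same argument once those are extended.
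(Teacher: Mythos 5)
Your proposal is correct and follows essentially the same route as the paper, which obtains Corollary~\ref{measure1} simply by observing that $X_1$ is the character lattice of a $1$-dimensional quotient torus of $\Ts$ (identified with $GL(1)$) and restricting the measure $\mu^*_F$ of Corollary~\ref{measure} to characters of that quotient, i.e.\ pushing forward along $\Ts(\Zp)\twoheadrightarrow GL(1,\Zp)$. Your extra care about the normalization discrepancy between $\Theta^{\chi-\alpha}$ in Corollary~\ref{measure} and $\Theta^{\lambda_b}$ in Corollary~\ref{measure1}, and about uniqueness via density of the monomials $x^b$, only makes explicit what the paper leaves implicit.
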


\begin{definition}\label{equivmeas} Let $A$ be a torus over $Spec(\Zp)$.  We say the $\CV_V$-valued measure $\mu$ on $A(\Zp)$ is equivariant of weight $\alpha$ if for any character $\chi$ of $A$, the integral 
$\int_{A(\Zp)}  \chi d\mu$ is a $p$-adic modular form of weight $\chi + \alpha$ for some fixed weight $\alpha$.
\end{definition}

The following corollary is then a consequence of Theorem \ref{pdiff} (c).
\begin{cor}\label{equivF}  Let $F\in \CV_V$ be a $p$-adic modular form of weight $\alpha$.  Then the measures $\mu^*_F$ (resp. $\mu_F$) on $\Ts(\Zp)$ (resp. $GL(1,\Zp)$) are equivariant of weight $\alpha$.
\end{cor}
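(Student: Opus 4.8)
The plan is to deduce the corollary formally from the characterizing properties recorded in Corollaries~\ref{measure} and~\ref{measure1} together with the weight–shift assertion Theorem~\ref{pdiff}(c). By Definition~\ref{equivmeas} what has to be shown is: for every character $\chi$ of the relevant torus ($\Ts$ for $\mu^*_F$, $GL(1)$ for $\mu_F$), the form obtained by integrating $\chi$ against the measure is a $p$-adic modular form whose weight is $\chi$ translated by one fixed weight attached to $F$. Since integrating a character against either measure was \emph{defined} to be a value of one of the operators $\Theta^{(\,\cdot\,)}$ applied to $F$, everything reduces to tracking the weight of $\Theta^{\lambda}(F)$, which is exactly what Theorem~\ref{pdiff}(c) controls.

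Concretely, I would first treat $\mu_F$. Under the identification of $X_1$ with the character lattice of $GL(1)$ fixed just before Corollary~\ref{measure1}, a character of $GL(1,\Zp)$ is $x\mapsto x^{b}$ for some $b\in\Zp$, corresponding to the weight $\lambda_b$ of \eqref{lambdab}. By Corollary~\ref{measure1},
$$\int_{GL(1,\Zp)} x^{b}\, d\mu_F \;=\; \Theta^{\lambda_b}(F).$$
Since $F$ is a $p$-adic modular form of weight $\alpha\in X^{an}(T_y)$, and $\lambda_b$ is by construction a character of the type to which Theorem~\ref{pdiff} applies (algebraic and dominant when $b\in\NN$, and a $p$-adic limit of such otherwise), part (c) of that theorem — read for $p$-adic $\chi$ by passing to the limit, exactly as $\Theta^\chi$ itself is defined in (b) — shows $\Theta^{\lambda_b}(F)$ is a $p$-adic modular form of weight $\alpha+\lambda_b$. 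Transporting $\lambda_b$ back to $x\mapsto x^{b}$, this is precisely the condition of Definition~\ref{equivmeas} with fixed weight $\alpha$; as $b$ was arbitrary we are done. The case of $\mu^*_F$ is handled identically, replacing Corollary~\ref{measure1} by Corollary~\ref{measure}: for a $p$-adic character $\chi$ of $\Ts$ one has $\int_{\Ts(\Zp)}\chi\, d\mu^*_F=\Theta^{\chi-\alpha}(F)$, and Theorem~\ref{pdiff}(c) again identifies the right-hand side as a $p$-adic modular form whose weight depends on $\chi$ in the affine fashion demanded by the definition.

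There is essentially no mathematical obstacle; the only thing needing genuine care is bookkeeping. One must match the two notions of ``character'' in play — a character of the abstract torus $A$ appearing in Definition~\ref{equivmeas} versus the symmetric characters of $T_y$ that index the operators $\Theta$ in \S\ref{pdiffops} — and check that the characters $\lambda_b$ (resp.\ $\chi-\alpha$) produced this way really lie in the class for which Theorem~\ref{pdiff}(b),(c) is available, i.e.\ that they are $p$-adic limits of dominant algebraic characters tending to infinity in the positive chamber. Both points are immediate from the construction of $X_1$ and the discussion preceding Corollary~\ref{measure1}; no analytic input beyond Theorem~\ref{pdiff} is required, so the corollary is, as stated, a direct consequence of Theorem~\ref{pdiff}(c).
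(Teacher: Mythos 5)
Your argument is correct and is exactly the route the paper takes: the paper's entire proof is the one-line remark that the corollary is a consequence of Theorem \ref{pdiff}(c), which you have simply unwound via the defining formulas of Corollaries \ref{measure} and \ref{measure1}. (The only point worth flagging is the paper's own normalization mismatch — Corollary \ref{measure} gives $\Theta^{\chi-\alpha}(F)$, hence weight $\chi$, while Corollary \ref{measure1} gives $\Theta^{\lambda_b}(F)$, hence weight $\alpha+\lambda_b$ — so the two measures are "equivariant" with respect to different fixed shifts; this is a convention issue in the source, not a gap in your reasoning.)
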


We will be pairing the measure $\mu_F$ -- or rather its restriction  to $Sh_{V'}$ -- with Hida families of ordinary $p$-adic modular forms on $U(V')$.  We could also pair the $\dim \Ts$-parameter measure with Hida families, but they will not give rise to more general special values, because the differential operators on $Sh_V$ in directions parallel to $Sh_{V'}$ do not change the automorphic representation of $U(V')$.  

Suppose now that $F \in \CV_V$ is a classical form of weight $\kappa$.  Let $\kappa'$ satisfy the inequalities of Lemma \ref{param}, so there is a holomorphic differential operator $\delta^{\kappa,\kappa'}$ as in Proposition \ref{diffops}.   

\begin{lemma}\label{decomp}  (a)  For all $\kappa^\dag$ that satisfy the inequalities of Lemma \ref{param}, there is a differential operator
$$\theta^{hol}(\kappa,\kappa^\dag):  \CV_V \ra \CV_V$$ 
such that 
$$res_{V'}\circ\theta^{hol}(\kappa,\kappa^\dag)(F) = \delta^{\kappa,\kappa'}(F).$$
(b)  For all $\kappa^\dag \leq \kappa'$,  there are differential operators
$\theta(\kappa,\kappa^\dag):  \CV_V \ra \CV_V$ such that
$$\theta(\kappa,\kappa') = \sum_{\kappa^{\dag} \leq \kappa'} res_{V'} \circ \theta(\kappa,\kappa^{\dag})\circ \theta^{hol}(\kappa,\kappa^{\dag}),$$
with $\theta(\kappa,\kappa)$ a non-zero scalar.  Here $res_{V'}$ is as in \eqref{resV}.  
\end{lemma}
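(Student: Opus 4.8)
The strategy is to transport the $C^\infty$ decomposition of Lemma \ref{pluridecomp} to the $p$-adic setting, using Theorem \ref{pdiff}(a) to match the $C^\infty$ Maass operators $D_\kappa^\lambda$ with the $p$-adic operators $\Theta^\lambda$ on the images of classical forms. First I would establish part (a). Given $\kappa^\dag$ satisfying the inequalities of Lemma \ref{param}, the holomorphic differential operator $\delta^{\kappa,\kappa^\dag}: \CE_\kappa|_{Sh(V')} \to \CE_{\kappa^\dag}$ of Proposition \ref{diffops} exists; the point is that the $p$-adic interpolation of its $C^\infty$ avatar is realized by a $p$-adic operator on $\CV_V$. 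Concretely, the operator $D^{hol}(\kappa,\kappa^\dag) = pr^{hol}_{[\kappa^\dag]'}\circ R^\infty_{V,V'}\circ D_\kappa^{\kappa^\dag-\kappa}$ appearing in Lemma \ref{pluridecomp} computes $\delta^{\kappa,\kappa^\dag}$ on holomorphic forms (up to a nonzero scalar, by the last sentence of that lemma together with the discussion following it). I would then \emph{define} $\theta^{hol}(\kappa,\kappa^\dag) := \Theta^{\kappa^\dag-\kappa}$, and check via Theorem \ref{pdiff}(a)(i) — applied with $\alpha = \kappa$ — that $\Theta^{\kappa^\dag-\kappa}\circ\Psi_\kappa = \Psi_{\kappa^\dag}\circ D_\kappa^{\kappa^\dag-\kappa,ord}$; restricting to $Sh_{V'}$ via $R_{V,V'}$ and comparing with the $C^\infty$ identity of Lemma \ref{pluridecomp} (using Theorem \ref{pdiff}(a)(ii) to pin down the comparison at CM points, hence everywhere by Zariski density of ordinary CM points) identifies $R_{V,V'}\circ\theta^{hol}(\kappa,\kappa^\dag)(F)$ with $\delta^{\kappa,\kappa'}(F)$ as claimed, once one absorbs the nonzero scalar.

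For part (b), the plan is to take the decomposition
$$D_\kappa^\lambda = \sum_{\kappa^\dag \leq \kappa'} \delta(\kappa',\kappa^\dag)\circ D^{hol}(\kappa,\kappa^\dag)$$
of Lemma \ref{pluridecomp}, where $\lambda = \kappa^\dag - \kappa$ in each summand is interpreted appropriately and $\delta(\kappa',\kappa^\dag) \in U(\fp^{+,\prime})$ is the element defined over $\CK$ there, and to $p$-adically interpolate each ingredient. Each $\delta(\kappa',\kappa^\dag) \in U(\fp^{+,\prime})$, being an algebraic differential operator defined over $\CK$ acting in the fiber directions tangent to $Sh_{V'}$, acts on $p$-adic modular forms on $Sh_{V'}$ — these are the "directions parallel to $Sh_{V'}$" mentioned just before the lemma; I would let $\theta(\kappa,\kappa^\dag)$ be the $p$-adic operator on $\CV_V$ whose restriction to $Sh_{V'}$ realizes this action (combined with whatever normalizing scalar is needed so that the degrees and weights match those in part (a)). Then applying the interpolation property of Theorem \ref{pdiff}(a) and the characterization via CM points (a)(ii) to the classical identity of Lemma \ref{pluridecomp} evaluated on $F$, one gets
$$\theta(\kappa,\kappa')(F) = \sum_{\kappa^\dag \leq \kappa'} res_{V'}\circ\theta(\kappa,\kappa^\dag)\circ\theta^{hol}(\kappa,\kappa^\dag)(F),$$
where $\theta(\kappa,\kappa') := \Theta^{\kappa'-\kappa}$ restricted suitably; the fact that $\delta(\kappa',\kappa')$ is a nonzero scalar in $\CK$ (Lemma \ref{pluridecomp}) gives that $\theta(\kappa,\kappa)$ is a nonzero scalar.

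\textbf{Main obstacle.} The delicate point is bookkeeping: matching the $C^\infty$ identity of Lemma \ref{pluridecomp}, which is an identity of operators on automorphic forms on the symmetric space, with a genuine identity of $p$-adic differential operators on $\CV_V$. Theorem \ref{pdiff}(a)(i) only controls $\Theta^\lambda$ on \emph{classical} forms of classical weight, while (a)(ii) controls restriction to ordinary CM pairs; so the argument that the two sides agree as operators (not just on a single $F$) relies on density of classical forms in $\CV_V$ and on the CM-point characterization, and one must check that the auxiliary operators $\delta(\kappa',\kappa^\dag)$ in the fiber directions genuinely extend $p$-adically and commute correctly with the $\Theta$'s. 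I expect this compatibility verification — essentially re-running the proof of \cite[Proposition 7.2.3]{EFMV} and \cite[Corollary 4.4.9]{EHLS} in the present notation, including tracking the nonzero $\CK$-rational scalars — to be the real content; the existence of each individual operator is formal once Theorem \ref{pdiff} is in hand.
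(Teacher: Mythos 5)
Your proposal is correct and follows essentially the same route as the paper: part (a) from the CM-point characterization in Theorem \ref{pdiff}(a)(ii) (the analogue of \cite[Proposition 8.1.1(d)]{EHLS}), and part (b) by $p$-adically interpolating the $C^\infty$ decomposition of Lemma \ref{pluridecomp} (the analogue of \cite[Corollary 8.1.2]{EHLS}). The paper's proof is only a two-line citation, so your fleshed-out version, including the caveat that $\theta^{hol}(\kappa,\kappa^\dag)$ is $\Theta^{\kappa^\dag-\kappa}$ composed with a projection onto the appropriate $K'_y$-type, is a faithful reconstruction of the intended argument.
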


\begin{proof} Part (a) is the analogue of Proposition 8.1.1 (d) of \cite{EHLS}; it is derived in the same way from properties of restriction to CM points -- in this case 
from Theorem \ref{pdiff} (a)(ii).   Part (b) is then the analogue of \cite[Corollary 8.1.2]{EHLS}.
\end{proof}

In what follows, the terms {\it anti-ordinary} and {\it anti-holomorphic} are used as in \cite{EHLS}; these are reviewed in Appendix \ref{antiord}.  

\begin{proposition}\label{tkk}  
There is a $p$-adic differential operator $\theta^{\kappa,\kappa'}:  \CV_V \ra \CV_{V'}$ with the property that, for any anti-holomorphic anti-ordinary automorphic form $g$ of weight $\kappa'$ on $U(V')$ and any holomorphic automorphic form $F$ of weight $\kappa$ on $U(V)$, we have
$$[\theta^{\kappa,\kappa'}(F),g] = [\delta^{\kappa,\kappa'}(F),g].$$
\end{proposition}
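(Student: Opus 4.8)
The plan is to build $\theta^{\kappa,\kappa'}$ out of the operators produced by Lemma \ref{decomp}, by discarding precisely the pieces that become invisible after pairing with an anti-holomorphic anti-ordinary form $g$ of weight $\kappa'$ on $U(V')$. Concretely, set
$$\theta^{\kappa,\kappa'} = \theta(\kappa,\kappa)^{-1}\cdot res_{V'}\circ \theta^{hol}(\kappa,\kappa')$$
(up to the normalizing scalar $\theta(\kappa,\kappa)\in\CK^\times$ from Lemma \ref{decomp}(b)). First I would check that this is a well-defined $p$-adic differential operator $\CV_V\ra\CV_{V'}$: this is immediate, since $\theta^{hol}(\kappa,\kappa')$ is a $p$-adic differential operator $\CV_V\ra\CV_V$ by Lemma \ref{decomp}(a), and $res_{V'}$ is the restriction of $p$-adic modular forms along \eqref{embo}/\eqref{emboIg}. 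By Lemma \ref{decomp}(a) applied to the classical form $F$ of weight $\kappa$, we have $R_{V,V'}\circ\theta^{hol}(\kappa,\kappa')(F) = \delta^{\kappa,\kappa'}(F)$, so
$$[\theta^{\kappa,\kappa'}(F),g] = \theta(\kappa,\kappa)^{-1}[R_{V,V'}\theta^{hol}(\kappa,\kappa')(F),g] = \theta(\kappa,\kappa)^{-1}[\delta^{\kappa,\kappa'}(F),g].$$
So the real content is to show the spurious scalar can be absorbed, i.e. that after pairing with $g$ the sum in Lemma \ref{decomp}(b) collapses to its leading term — equivalently that the operators $\theta(\kappa,\kappa^\dag)$ for $\kappa^\dag < \kappa'$ contribute nothing.

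The key step, then, is the vanishing claim: for $\kappa^\dag\leq\kappa'$ with $\kappa^\dag\neq\kappa'$,
$$[res_{V'}\circ\theta(\kappa,\kappa^\dag)\circ\theta^{hol}(\kappa,\kappa^\dag)(F),\,g] = 0$$
whenever $g$ is anti-holomorphic anti-ordinary of weight $\kappa'$. The reason is representation-theoretic and is exactly the mechanism behind Lemma \ref{pluridecomp}: the operator $\theta(\kappa,\kappa^\dag)\circ\theta^{hol}(\kappa,\kappa^\dag)$, upon restriction to $Sh_{V'}$ and in terms of the decomposition \eqref{reskappa}, lands in the $U(\fg')$-constituent $\DD_{\kappa^\dag}$ — more precisely, $\theta^{hol}(\kappa,\kappa^\dag)(F)$ restricts to the highest $K'_y$-type of $\DD_{\kappa^\dag}$, and $\theta(\kappa,\kappa^\dag)\in U(\fp^{+,\prime})$ moves it within $\DD_{\kappa^\dag}$. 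When $\kappa^\dag\neq\kappa'$, the image is a vector in an automorphic representation of $U(V')$ whose archimedean component is (the $(\fg',K'_y)$-module of) a discrete series of infinitesimal character/weight $\kappa^\dag\neq\kappa'$, hence orthogonal, under the $U(V')(\af)$-invariant pairing, to the anti-holomorphic form $g$, whose relevant archimedean constituent is attached to weight $\kappa'$. I would phrase this as: the cohomological Serre-duality pairing \eqref{Serre} is nonzero only between automorphic representations with dual archimedean components, and distinct holomorphic discrete series parameters give inequivalent (hence orthogonal) contributions. The anti-ordinary hypothesis on $g$ is what licenses transferring this archimedean orthogonality statement to the pairing of $p$-adic modular forms, via the comparison between $[\,\cdot\,,\,\cdot\,]$ on $p$-adic forms and the classical cup-product/Petersson pairing (as in the construction of \eqref{intHd} and \eqref{Serre}, following \cite{EHLS}).

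I would organize the write-up in two moves. First, cite Lemma \ref{decomp}(b) to write $\theta(\kappa,\kappa')(F)$ restricted to $V'$ as $\sum_{\kappa^\dag\leq\kappa'} res_{V'}\theta(\kappa,\kappa^\dag)\theta^{hol}(\kappa,\kappa^\dag)(F)$, then pair with $g$; by the vanishing claim only the $\kappa^\dag=\kappa'$ term survives, and by Lemma \ref{decomp}(a) its pairing with $g$ equals $\theta(\kappa,\kappa)\cdot[\delta^{\kappa,\kappa'}(F),g]$ up to the scalar identification. Second, define $\theta^{\kappa,\kappa'}$ by rescaling, so that $[\theta^{\kappa,\kappa'}(F),g]=[\delta^{\kappa,\kappa'}(F),g]$ on the nose. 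I expect the main obstacle to be the vanishing claim — specifically, justifying that the archimedean orthogonality of the $\DD_{\kappa^\dag}$ for distinct holomorphic discrete series parameters persists under the pairing of \emph{$p$-adic} (ordinary/anti-ordinary) forms rather than $C^\infty$ automorphic forms. This is the same subtlety that is handled in \cite{EHLS} (and it is why the anti-ordinary condition on $g$ is imposed), so I would lean on the corresponding argument there — the commutation of $pr^{hol}$, restriction, and the Maass operators with the ordinary projector, together with Lemma \ref{pluridecomp} — and remark that the analogue of \cite[Corollary 8.1.2]{EHLS} already packages most of what is needed.
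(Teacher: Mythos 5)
Your proposal lands on essentially the same mechanism as the paper: the proof given there is exactly ``Lemma \ref{decomp} plus estimates on the denominators used to define the ordinary projector, as in \cite[Proposition 8.1.3]{EHLS}'', and the second half of your write-up --- the collapse of the sum in Lemma \ref{decomp}(b) to its leading term after pairing with an anti-holomorphic anti-ordinary $g$, the lower terms dying because $\theta(\kappa,\kappa^{\dag})$ is a positive-degree element of $U(\fp^{+,\prime})$ which integrates by parts onto $g$ (equivalently, because those terms land in constituents $\DD_{\kappa^{\dag}}\not\cong\DD_{\kappa'}$ of \eqref{reskappa}), and the anti-ordinary hypothesis licensing the passage from the archimedean to the $p$-adic pairing --- is precisely the content of that argument.

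One correction, though: your opening definition undercuts the rest of your proof. If you set $\theta^{\kappa,\kappa'}=\theta(\kappa,\kappa)^{-1}\,res_{V'}\circ\theta^{hol}(\kappa,\kappa')$, then Lemma \ref{decomp}(a) already gives $\theta^{\kappa,\kappa'}(F)=\theta(\kappa,\kappa)^{-1}\delta^{\kappa,\kappa'}(F)$ on the nose for classical $F$ (and the scalar you insert is spurious --- it belongs to the decomposition in part (b), not to the identity in part (a)), so the proposition would be vacuously true with no vanishing claim needed. But that operator is not the one the statement is about. The operator that must satisfy the identity is the weight-$\kappa'$ component of the $p$-adic Maass operator $\Theta^{\lambda_b}$ --- i.e.\ the left-hand side $\theta(\kappa,\kappa')$ of Lemma \ref{decomp}(b) --- because that is what specializes the measure $\mu_F$ and hence what appears as $L(F,\tau,x)$ in Theorem \ref{specialization}; see the Remark immediately following the proposition. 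For that operator your collapse argument \emph{is} the proof, so delete the first definition and run the second half as written. Finally, when you defer the $C^{\infty}$-to-$p$-adic transfer to \cite{EHLS}, name the actual issue: the ordinary projector is a $p$-adic limit of $U_p$-iterates, and one must bound the denominators contributed by the non-leading terms before passing to that limit; it is this estimate, not merely the formal commutation of $pr^{hol}$ and restriction with the projector, that the anti-ordinary hypothesis on $g$ is there to secure.
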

\begin{proof}   This follows from Lemma \ref{decomp} and from estimates on the denominators used to define the ordinary projector, as in the proof of 
\cite[Proposition 8.1.3]{EHLS}.
\end{proof}




In what follows we assume $F$ to belong to a fixed holomorphic automorphic representation $\pi$.

\section{Hida families}\label{Hidafamm}

Recall the maximal torus $T' \subset U(V')$.  Let $\Lambda' = \Lambda_{\CO}(T')$ be the Iwasawa algebra of $T'(\Zp)$; it is a noetherian local ring that is non canonically isomorphic to the tensor product over $\CO$ of $n-1$ copies of $\CO[[1+p\ZZ_p]]$.  

\subsection{Ordinary parameters}\label{ordparam}  

We let $\TT$ denote the ordinary $p$-adic Hecke algebra for cusp forms on the group $U(V')$.  
For the purposes of this paper, a Hida family is determined by a single anti-ordinary anti-holomorphic automorphic representation $\tau$ of $U(V')$, and the completion $\TT_\tau$ of $\TT$ at the maximal ideal $\fm_\tau$ corresponding to $\tau$.  
We write $\CO$ for the coefficient ring denoted $\CO_\tau$ as in \S 7.3 of \cite{EHLS}, so that $\Lambda' = \Lambda_{\Zp}(T')\otimes \CO$, where $\Lambda_{\Zp}(T')$ is the  Iwasawa algebra of weights for $T'_y$.  We let $\mathbf{T}^{\ord}_{K'_r,\kp,\CO}$ denote the ordinary Hecke algebra of weight $\kp$ and level $K'_r$ -- notation as  in \cite[\S 6.6.6]{EHLS} --
and let $\TT_{r,\kp}$ denote the completion of $\mathbf{T}^{\ord}_{K'_r,\kp,\CO}$ at $\fm_\tau$; the representation $\tau$ will be 
fixed for the remainder of the paper.   We can write
$$\TT_\tau = \varprojlim_r \TT_{r,\kp,\tau}$$ 
for any sufficiently regular $\kappa'$, as in \cite[\S 7.1, Theorem 7.1]{EHLS}.   For all claims regarding Hida families we refer to Hida's book \cite{Hida}.  In particular,  $\TT_\tau$ is a finite flat $\Lambda'$-algebra \cite{Hida}.

\subsection{The Gorenstein condition}

We introduce two versions of the Gorenstein hypothesis that was used in \cite{EHLS} to define $p$-adic $L$-functions with values in Hida's ordinary Hecke algebra.  The first is adapted to (holomorphic) automorphic forms of fixed weight $\kappa'$; compare \cite[Definition 6.7.9]{EHLS}:

\begin{definition}\label{Gorfin}  
The $\TT_{r,\kp}$-module $S^{\ord}_{\kp}(K'_r,\CO)_{\tau}$ is said to satisfy
the {\it Gorenstein hypothesis} if the following conditions hold.
\begin{itemize}
\item  $\TT_{r,\kp} \isoarrow \hat{\TT}_{r,\kp} := Hom_\CO(\TT_{r,\kp},\CO)$ as $\CO$-algebras.
\item  $S^{\ord}_{\kappa'}(K'_r,\CO)_{\tau}$ is free over $\TT_{\kp}$.
\end{itemize}
The $\bT^{\ord}_{K'_r,\kp,\CO}$-module $S^{\ord}_{\kp}(K'_r,\CO)$ is said to satisfy
the Gorenstein hypothesis if all its localizations at maximal ideals
of $\bT_{K'_r,\kp,\CO}$ satisfy the two conditions above.
\end{definition}

The second version is a hypothesis on the big ordinary Hecke algebra, which is a finite $\Lambda'$-algebra; compare \cite[Hypothesis 7.3.2]{EHLS}.

\begin{hyp}\label{gor}{(Gorenstein Hypothesis)}  Let $\hat{\TT}_{\tau} = Hom_{\Lambda'}(\TT_{\tau^\flat},\Lambda')$.
Then 
\begin{itemize}
\item $\hat{\TT}_{\tau}$ is a free rank-one $\TT_{\tau}$-module via the isomorphism $\flat:  \TT_{\tau} \isoarrow \TT_{{\tau^\flat}}$.
\item For each $r$, let $\TT_{\tau}$ act on $Hom_{\CO}(S^{\ord}_{\kappa'}(K^{\prime,p}_r,\CO),\CO)_{\fm_{\tau}}$ by the natural
action twisted by $\flat$.  Then 
$$\hat{S}^{\ord}_\tau = Hom_{\CO}(\varinjlim_r S^{\ord}_{\kappa'}(K^{\prime,p}_r,\CO),\CO)_{\fm_{\tau}}$$ 
(for any sufficiently regular $\kappa'$) is a free
$\TT_{\tau}$-module.
\end{itemize}
\end{hyp}

The hypothesis in Definition \ref{Gorfin} follows for sufficiently regular $\kappa'$ from Hypothesis \ref{gor} and Hida's control theorem.
More precisely, let
\begin{equation}\label{hatS}
\hat S_{\kappa'}^\ord(K;\CO) = Hom_\CO(S_{\kappa'}^\ord(K;\CO),\CO),
\end{equation}
and define $\hat S_{\kappa'}^\ord(K;\CO)_\tau$ analogously.
Then Hida's control theorem (see \cite[Theorem 7.3.1]{EHLS}) asserts, in the present notation, that
\begin{equation}\label{control}
\TT_{\tau}\otimes_{\Lambda'}\Lambda'/I_{\kappa'} \isoarrow (\bT^{\ord}_{K,\kappa,\CO})_{\tau}
\end{equation}
for sufficiently regular $\kappa'$.  Under Hypothesis \ref{gor}, there is then an isomorphism
\begin{equation}\label{duality}
\hat S_{\kappa'}^\ord(K;\CO)_{\tau} \isoarrow S_{\kappa'}^\ord(K;\CO)_\tau
\end{equation}
of dual free $\TT_{\kappa'}$-modules.  We introduce compatible bases of these modules in the next section.

\subsubsection{Bases}\label{bases}  We let
\begin{equation}\label{Om}  
\Omega_\tau = Hom_{\TT_\tau}(\hat{\TT}_\tau,\TT_\tau)
\end{equation}
\begin{equation}\label{Omk}  
\Omega_{\kappa',\tau} = Hom_{\TT_\kp}(\hat{\TT}_\kp,\TT_\kp)
\end{equation}

Under the Gorenstein hypotheses, $\Omega_\tau$ (resp $\Omega_{\kappa',\tau}$)
is a free rank $1$ $\TT_\tau$- (resp. $\TT_\kp$-) module.  In particular, the set of $\TT_\tau$-isomorphisms between
$\TT_\tau$ and $\hat{\TT}_\tau$ (resp. between $\TT_\kp$ and $\hat{\TT}_\kp$) is a torsor under $\TT_\tau^\times$
(resp. $\TT_\kp^\times$).  

\subsection{Ramified local components}\label{local}

We let $\pi$ be the automorphic representation of $G_V$ corresponding to the holomorphic modular form $F$.  Let $\tau$ be as in the previous section, and let $S$ denote the finite set of finite primes  $v$, not including $p$, such that 
either $\pi_v$,  $\tau_v$, or $\CK/\QQ$ is ramified.  In \cite[\S 7.3.4]{EHLS} we introduce a free $\CO$-lattice
$$\hat{I} = \hat{I}_{\tau} \subset (\bigotimes_{v \in S} ~\tau_v)^{K^p}$$ 
with the property that
\begin{itemize}
\item[(a)]  For all sufficiently regular $\kappa'$, there is an isomorphism of $\TT_{\kappa',\tau}$-modules $\hat S_{\kappa'}^\ord(K;\CO)_{\tau} \isoarrow \TT_{\kappa',\tau}\otimes_\CO \hat{I}$;
\item[(b)]  And there is an isomorphism of $\TT_\tau$ modules $\hat{S}^{\ord}_\tau \isoarrow \TT_{\tau}\otimes_\CO \hat{I}$, compatibly with (a) and with the control isomorphisms \eqref{control}.
\end{itemize}

(In \cite{EHLS} the group is $GU(V)$ rather than $U(V')$ and the notation is $\pi^\flat$ rather than $\tau$, but the lattice is defined in the same way.)   

Let $\LL = Frac(\CO)$, $\bar{\LL}$ its algebraic closure.  For any sufficiently regular $\kp$, there is a decomposition 
\begin{equation}\label{congtau}
\TT_{\kappa',\tau}\otimes_\CO \bar{\LL} \isoarrow \bigoplus_{\pi' \in [\tau]_\kp} \TT_{\pi'}
\end{equation}
The elements of $[\tau]_\kp$ are the (anti-ordinary) antiholomorphic automorphic representations $\pi'$ of weight $\kp$ whose Hecke eigenvalues at places outside $S$ are congruent to those of $\tau$; the action of $\TT_{\kappa',\tau}$ on the vectors in each $\pi'$   
factors through the corresponding component $\TT_{\pi'}$.  

We let $K'_S =  K^p \cap \prod_{v \in S} G_{V'}(\QQ_v)$ and assume $K'_S$ admits a factorization $\prod_{v \in S} K'_v$.  We have implicitly been assuming that
the finite parts of our automorphic representations are defined over $\LL$ (see Remark \ref{rationality}).  Let
$$\hat{I}_\LL = \hat{I}\otimes_\CO \LL = \bigotimes_{v \in S} ~(\tau_v)^{K'_v}(\LL).$$
This is naturally the tensor product over $v \in S$ of {\it irreducible} representations of the local (ramified) Hecke algebra $\CH_\LL(G_{V'}(\QQ_v),K'_v)$ of compactly supported $\LL$-valued $K'_v$-biinvariant functions on $G_{V'}(\QQ_v)$.  Let
$\CH_\CO(G_{V'}(\QQ_v),K'_v) \subset \CH_\LL(G_{V'}(\QQ_v),K'_v)$ denote the subalgebra of $\CO$-valued functions and let
$\CH_S = \otimes_v \CH_\CO(G_{V'}(\QQ_v),K'_v)$.
We  make the following simplifying hypothesis:

\begin{hyp}[Local minimality]\label{mini}  For each $v \in S$, there is an $\CO$-lattice $\hat{I}_v \subset (\tau_v)^{K'_v}(\LL)$, invariant under 
$\CH_\CO(G_{V'}(\QQ_v),K'_v)$, with the property that $\hat{I} \isoarrow \otimes_{v \in S} \hat{I}_v$ and
$$\hat{M}^0_{\tau} := \hat{M}^0_{\tau,\hat{I}} := Hom_{\CH_S}(\hat{I},\hat{S}^{\ord}_\tau)$$
is a free rank $1$ $\TT_{\tau}$-module.
\end{hyp}

We let
\begin{equation}\label{minix} \hat{M}_{\tau} := Hom_{\TT_{\tau}}(\hat{M}^0_\tau, \TT_\tau).
\end{equation}
Under Hypothesis \ref{mini}, $\hat{M}_{\tau}$ is a free rank $1$ $\TT_\tau$-module.

Recall that each $v \in S$ is of characteristic prime to $p$, so we can apply the methods of the mod $p$ representation theory of $G_{V'}(\QQ_v)$ \cite{V}.  In particular, one can define the reduction $\bar{\tau}_v$ of each $\tau_v$ for $v \in S$ modulo the maximal ideal of $\CO$, as a semisimple representation of $G_{V'}(\QQ_v)$ of finite length. It is easy to see that Hypothesis \ref{mini} is automatic if each $\bar{\tau}_v$ is irreducible.  In particular, by the theory of \cite{V}, this holds if $p$ is banal for  all $G_{V'}(\QQ_v)$ with $v \in S$, and in particular if $p$ is sufficiently large.  For $v$ split in $\CK$, the condition can be read off the $p$-adic Galois representation attached to $\tau_v$ by the local Langlands correspondence, and corresponds to the usual hypothesis in Galois deformation theory that the Galois representations attached to $\tau'$ congruent to $\tau$ modulo $p$ are  minimally ramified at such $v$.  This is probably also the case for $v$ inert or ramified in $\CK$, but as far as I know this has not been verified.   

The notation $\hat{I}$ is deleted for the sake of legibility.  
By \eqref{control}, Hypothesis \ref{mini} implies that, for all sufficiently regular $\kappa'$,
\begin{equation}\label{minik}
\hat{M}^0_{\kappa',\tau} :=Hom_{\CH_S}(\hat{I},\hat{S}_{\kappa'}^\ord(K;\CO)_{\tau})
\text{ is a free rank $1$ $\TT_{\kappa',\tau}$-module. }
\end{equation}


 An element $f'_S = \sum_j \otimes_{v\in S} f'_{v,j} \in \hat{I}$, with $f'_{v,j} \in \tau_v$, is called {\it primitive} if it generates a $\CO$-direct summand of $\hat{I}$.    Choose a generator $\hat{m}$ of $\hat{M}_{\tau}$ as $\TT_\tau$-module and a primitive $f'_S \in \hat{I}$.  Then for every sufficiently regular $\kp$, $\hat{m}^{-1}(f'_S)$ generates $S_\kp^\ord(K;\CO)_\tau$ over $\TT_{\kp,\tau}$; it defines a linear combination
 \begin{equation}\label{fpip}
\hat{m}^{-1}(f'_S) = \sum_{\pi' \in [\tau]_\kp} (f_{\pi'})  = f'_S\otimes [\sum_{\pi' \in [\tau]_\kp} f^{\prime,S}_{\pi'}] 
:= f'_S\otimes f^{\prime,S}_{\hat{m}}
 \end{equation}
 with notation as in \eqref{congtau}, where $f_{\pi'} = f'_S\otimes f^{\prime,S}_{\pi'} \in \pi'$ with  $f^{\prime,S}_{\pi'}$ a Hecke eigenvector.  
 
 Note that each individual $f_{\pi'}$ is not necessarily integral, but the sum $f^{\prime,S}_{\hat{m}}$ is a divided congruence and is defined over $\CO$.
As we have noted, under the Gorenstein hypothesis the set of generators $\hat{m}$ of $\hat{M}_{\tau}$ is a torsor under 
$\TT_\tau^\times$.   If $\mathbf {t} \in \TT_\tau^\times$, then 
$$f^{\prime,S}_{\mathbf {t}\cdot\hat{m}} = \mathbf {t}\cdot f^{\prime,S}_{\hat{m}}.$$


\section{Contraction of $p$-adic measures with Hida families}\label{contraction}

We fix a ring $\CO$ of integers in a finite extension of $\Qp$, and we assume $\CO$ is a subalgebra of the algebra $\CV_V$ of $p$-adic modular forms.  For a $p$-adic torus $A$ we define $Meas(A,\CV_V)$ as in \S \ref{bdef}.   We choose a collection of congruence subgroups
$$A(\Zp) \supset A_1 \supset A_2 \dots \supset A_r \supset \dots$$
with $\cap_i A_i = \{1\}$, and we let 
$$C_r(A,\CO)  \subset C(A(\Zp),\CO)$$ 
be the $\CO$-submodule of $\CO$-valued functions on $A(\Zp)/A_r$.  
For any algebraic character $\chi$ of $A$ we consider $C_r(A,\CO)\chi$ as a finite rank $\CO$-submodule of $C(A(\Zp),\CO)$. 

\subsection{Review of equivariant measures}\label{patching}
Let $\alpha$ be a character as in Definition \ref{equivmeas}.  
As in in \cite[Lemma 7.4.2]{EHLS}, we can identify an equivariant measure $\phi \in Meas(A,\CV_V)$ of weight $\kappa$
\begin{equation}\label{rho}
\phi(a\cdot f) = \kappa(a) \cdot a\cdot \phi(f), ~~\forall a \in A(\Zp), f \in C(A(\Zp),\CO)
\end{equation}
with a collection 
\begin{equation}\label{phir} 
(\phi_{r,\chi}) \in Hom_{\Lambda_{\CO}(A)}(C_r(A,\CO)\kappa\cdot\chi,\CV_V),
\end{equation}
satisfying a certain distribution relation, written 
$$\eta^*_r(\phi_{r+1, \chi}) = \phi_{r, \chi};$$
we refer the reader to \cite{EHLS} for the definition.   (There is a misprint in \cite{EHLS}:  the factor corresponding to $a\cdot$ after $\alpha(a)$ in \eqref{rho} is missing.)

In the application in this paper, $A$ is the torus $T'_y$.   The classical weights are Zariski dense in $Spec(\Lambda')$.  Recall that we have defined $[\kappa]'$ to be the restriction of the weight $\kappa$ to $T'_y$.  We define $\Lambda'_\kappa$ (which we could also write $\Lambda_{[\kappa]'}$)  to be the quotient of $\Lambda'$ corresponding to the Zariski closure of highest weights of $T'_y$ of the form $\kappa'_b := [\kappa]' + b(1,0,\dots,0)$.  As a ring, $\Lambda'_\kappa$ is isomorphic to $\Lambda'_0$, which is just the Iwasawa algebra $\CO[[T]]$.  We assume our chosen automorphic representation $\tau$ of $U(V')$ is of weight contained in $Spec(\Lambda'_\kappa)$.  In the discussion above, 
$[\kappa]'\cdot \chi$ is taken to be a character $\kappa'_b$, which we henceforth abbreviate $\kappa'$.
In other words, we don't assume that
$\kappa' = [\kappa]'$, but we do want $\kappa'$ to correspond to a classical point of $Spec(\Lambda'_\kappa)$.


We choose the filtration $(A_r) = (T'_{y,r})$
to be compatible with the filtration $K'_{p,r}$ of $K'_p$.
 In \eqref{phir} we can replace
$\Lambda_{\CO}(T'_y)$ by its quotient $\Lambda'_\kappa$, which is an Iwasawa algebra in one variable.  Let
$\CJ \subset \Lambda_{\CO}(T'_y)$ denote the kernel of the homomorphism to $\Lambda'_\kappa$.
Let $(\phi_{r,\chi})$ be as above, and define
$$\phi'_{r,\chi} = res_{V'}\circ \phi_{r,\chi} \in Hom_{\Lambda'_\kappa}(C_r(T'_y,\CO)[\CJ]\kappa\cdot\chi,\CV_{V'}), r \geq 0$$
Here $C_r(T'_y,\CO)[\CJ] \subset C_r(T'_y,\CO)$ is the $\CO$-submodule of functions annihilated by $\CJ$.

As in \cite{EHLS}, we define contraction of $p$-adic measures with Hida families by fixing a sufficiently regular classical weight 
$\kappa' = [\kappa]'\cdot \chi$ of $T'_y$ and taking the limit over $r$ of pairings of the the $\phi'_{r,\chi}$ with the level $K'_r$
components of a fixed Hida family.   
We start with the equivariant measure $\mu_F$ of weight $\kappa$, then, and define  an element $\mu'_F$ of
$Hom_{\Lambda'_\kappa}(C(\ZZ_p^\times,\CO),\CV_{V'})$ by  restricting $\mu_F$ via the map $res_{V'}$;
let $(\phi'_{r,\chi}) = (\phi'_{r,\chi,\tau})$ be the corresponding collection of homomorphisms in
$Hom_{\Lambda'_\kappa}(C_r(A,\CO)[\kappa]'\cdot\chi,\CV_{V}')$.

Let $e'_\tau$ denote the ordinary projector $\CV_{V'} \ra \CV_{V'}^{\ord}$, composed with localization at the maximal
ideal $\mathfrak{m}_\tau$.  
For $\kappa' = [\kappa]' \cdot \chi$ sufficiently regular, it follows from \eqref{control} that the image of $e'_\tau\circ \phi'_{r,\chi}$ is contained in
\begin{equation}\label{du0}
S^{\ord}_{\kappa'}(K'_r,\CO) \isoarrow Hom_{\CO}(\hat{S}^{\ord}_{\kappa'}(K'_r,\CO),\CO).
\end{equation}
More precisely, denoting localization at $\fm_\tau$ by the subscript $_\tau$, the image of $e'_\tau\circ \phi'_{r,\chi}$ lies in 
\begin{equation}\label{du}
S^{\ord}_{\kappa'}(K'_r,\CO)_\tau \isoarrow Hom_{\CO}(\hat{S}^{\ord}_{\kappa'}(K'_r,\CO)_\tau,\CO),
\end{equation}
where the left-hand side is the image of $S^{\ord}_{\kappa'}(K'_r,\CO)$ in the right-hand side of \eqref{du0} after
localization at $\fm_\tau$.

\subsection{Application of the Gorenstein Hypothesis \ref{gor}}  
{  }

The right hand side of \eqref{du} is isomorphic to
$\hat{S}^{\ord}_{\kappa'}(K'_r,\CO)_\tau$ as finite free module of rank $M$ equal to the $\CO$-rank of $\hat{I} = \hat{I}_\tau$ over $\TT_{\tau}$.    
We choose
$\TT_\tau$ bases $\hat{m} \in \hat{M}_\tau$ and $\omega \in \Omega_\tau$, and a primitive $f'_S \in \hat{I}$,
and obtain the following diagram:

\begin{equation}\label{con}
\begin{aligned}
S^{\ord}_{\kp}(K'_r,\CO)_\tau &\isoarrow 
Hom_\CO(\hat{S}^\ord_{\kp}(K'_r,\CO)_\tau),\CO)) \\ &\isoarrow 
Hom_\CO([\hat{I}\otimes_\CO Hom_{\CH_S}(\hat{I},\hat{S}^\ord_{\kp}(K'_r,\CO)_\tau)],\CO)
 \\  &\overset{\hat{m}}{\lra}  Hom_\CO(\hat{I}\otimes_\CO\TT_{r,\kp,\tau} ,\CO) 
 \\ &\overset{\omega}\lra ~Hom_\CO(\hat{I},\CO)\otimes_\CO\TT_{r,\kp,\tau}
 \\ &\overset{f'_S\otimes id}\lra ~~~\TT_{r,\kp,\tau},
\end{aligned}
\end{equation}
where the last line inserts $f'_S$ in the factor $Hom_\CO(\hat{I},\CO)$.
  

We now apply the discussion of \S \ref{patching}  when $(\phi_{r,\chi})$ is attached to the measure $\mu_F$.  As in \cite[\S 7.4]{EHLS}, 
the collection 
$$(\phi'_{r,\chi,\tau} \in S^{\ord}_{\kappa'}(K'_r,\CO)_\tau)_{r \geq 0}$$
 patch together to an element $L^0(F,\tau)$ of a finite free rank $M$ $\TT_\tau$-module.  By choosing $\hat{m}, \omega$, and $f'_S$ as
 above, we identify $L^0(F,\tau)$ with an element
 $$L(F,\tau) := L(F,f'_S,\tau,\hat{m},\omega) \in  \TT_\tau.$$
as in \cite[Proposition 7.4.10]{EHLS}.   Note, however, that this element is not independent of the choices.  In particular, the bases
$\hat{m}$ and $\omega$  of their respective rank $1$ $\TT_\tau$-modules
are only defined up to multiplication by units in $\TT_\tau^\times$.  In general there seems to be no
canonical choice of these bases, in contrast to the familiar case of new forms for $GL(2)$, when the leading term of the $q$-expansion defines a canonical choice.


\subsection{Pairings and periods}\label{Hidafam}


 
 We summarize the construction in the previous section.

\begin{thm}\label{specialization}  Suppose $F$ is classical of weight $\kappa$.  Let $\TT_\tau$ be a component of the ordinary Hecke algebra for cusp forms on $U(V')$; we admit Hypotheses \ref{gor} and \ref{mini}.  Let $L(F,\tau)$ denote the element
$L(F,f'_S,\tau,\hat{m},\omega) \in  \TT_\tau$ constructed above.  Let $x \in Spec(\TT_\tau)$ lie over a weight $\kappa' \in Spec(\Lambda'_\kappa)$, and suppose it corresponds to an eigenform $f'_x = f'_S\otimes  f^{\prime,S}_{\hat{m},x}$ as in \eqref{fpip}.  Then 
\begin{equation}\label{specialx}
\begin{aligned}
&L(F,\tau,x) =  [\theta^{\kappa,\kappa'}(F),f'_x] \\ = & \int_{[U(V')]}  \delta^{\kappa,\kappa'}(F)(g') \cdot f'_x(g') dg' 
= P_{U(V')}(\delta^{\kappa,\kappa'}(F), f'_x). 
\end{aligned}
\end{equation}
\end{thm}
\begin{proof}   Since $f'_x$ is anti-ordinary, this follows immediately from Proposition \ref{tkk}. 
\end{proof}

\begin{remark}  The function $L(F,\tau)$ can be described more canonically as an element of
$\TT_\tau\otimes_\CO \hat{M}_\tau\otimes \Omega_\tau$, or alternatively as a section of the line bundle on
$Spec(\TT_\tau)$ corresponding to the module $\hat{M}_\tau\otimes \Omega_\tau$.    In cases involving elliptic
modular forms (for example, in \cite{HT}) the theory of the $q$-expansion provides a canonical everywhere non-vanishing
section of this line bundle.   I don't know whether or not to expect the corresponding line bundle to be trivial 
more generally, when the Gorenstein condition is satisfied.  The question will be examined more carefully in the sequel
to this paper.
\end{remark}

\section{The Ichino-Ikeda formula and the main theorem}  

The formula is given by the Ichino-Ikeda N. Harris conjecture:
\begin{conj} \label{conjectureII} 
Let $f \in \pi$, $f' \in \pi'$ be factorizable vectors.
Then there is an integer $\beta$, depending on the $L$-packets containing $\pi$ and $\pi'$, such that
$$\CP(f,f') = 2^{\beta}\Delta_{H} \CL^S(\pi,\pi') \ \prod_{v \in S} I^*_v(f_v,f'_v).$$
\end{conj}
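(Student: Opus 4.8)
The statement is the global Gan-Gross-Prasad period identity in the form of Ichino-Ikeda and N.~Harris, and the only route I know to it is the comparison of relative trace formulae initiated by Jacquet and Rallis; I would organize a proof as follows.

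\emph{Step 1: the two trace formulae.} On the general linear side, set up the Jacquet-Rallis relative trace formula for $GL(n)\times GL(n-1)$ over $\CK$, with the two periods whose regular semisimple orbital integrals are indexed by the same invariants that govern the unitary side. On the unitary side, set up the relative trace formula for $U(V)\times U(V')$ with the diagonal period over $H=U(V')$. Both geometric sides are sums of orbital integrals over matching sets of regular semisimple orbits, and both spectral sides decompose along near-equivalence classes of automorphic representations, with $\CP(f,f')$ occurring (essentially as a square) in the term attached to $\pi\times\pi'$ on the unitary side.

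\emph{Step 2: local transfer.} Establish the Jacquet-Rallis fundamental lemma at the unramified places (Yun, with Gordon's extension to mixed characteristic) and smooth transfer of orbital integrals at the remaining finite places and at the archimedean places (W.~Zhang in the $p$-adic case, Beuzart-Plessis including the archimedean case), together with the compatibility of transfer factors. Choosing matching global test functions then yields an identity between the two relative trace formulae.

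\emph{Step 3: spectral comparison and the $L$-value.} For $\pi\times\pi'$ everywhere tempered whose base change $\Pi\times\Pi'$ to $GL(n)\times GL(n-1)$ is cuspidal -- this is where the supercuspidal-at-one-place hypothesis enters, to force cuspidality and to isolate the relevant spectral term -- project both sides onto the corresponding isotypic subspaces. On the general linear side the Rankin-Selberg period of Jacquet, Piatetski-Shapiro and Shalika computes the central value $L(1/2,\Pi\times\Pi')$ up to an explicit product of local Rankin-Selberg zeta integrals and up to normalizing $L$- and $\epsilon$-factors; the latter account for $\Delta_H$ and the power $2^{\beta}$, while the unramified zeta integrals reassemble into the partial $L$-function $\CL^S(\pi,\pi')$ produced by the fundamental lemma. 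Unwinding the spectral matching then gives $\CP(f,f')=2^{\beta}\Delta_H\,\CL^S(\pi,\pi')\prod_{v\in S}I^*_v(f_v,f'_v)$, with $I^*_v$ the normalized local Gan-Gross-Prasad integrals.

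The main obstacle -- and the reason this appears as a conjecture rather than a theorem -- is the global spectral comparison in full generality: controlling the endoscopic and non-tempered contributions and effecting descent from $GL(n)$ to the unitary group, which at present forces exactly the temperedness and base-change restrictions recorded in the introduction. For the application made here the formula is needed only at classical points, where $\pi$ is cohomological and $\Pi=BC_{\CK/\QQ}(\pi)$ is cuspidal by \cite{La11}, so the known cases of the conjecture suffice; and by \cite{BP} the local factors $I^*_v$ at $v\mid p\infty$, which are deliberately left unevaluated in this paper, can in principle be expressed through local Rankin-Selberg integrals for $GL(n)\times GL(n-1)$.
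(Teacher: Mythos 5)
This statement is stated in the paper as a conjecture (the Ichino--Ikeda--N.~Harris conjecture); the paper offers no proof of it, and instead invokes the known cases as Theorem \ref{IItheorem}, citing \cite{Z14}, \cite{X}, and \cite{BP18}. Your outline is an accurate description of exactly the Jacquet--Rallis relative-trace-formula strategy underlying those citations, including the correct role of the temperedness and supercuspidal-at-one-place hypotheses (to isolate the cuspidal spectral term and bypass the full endoscopic comparison), so your account is consistent with what the paper actually uses. The only caveat is that you should not present this as a proof of the conjecture as stated: in the generality of the statement it remains open, and what you have sketched is the proof of the partial result that the paper needs at classical points.
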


Here
$$\CP(f,f'):=\cfrac{|P_{U(V')}(f,f')|^{2}}{<f,f> <f',f'>}.$$
and
\begin{equation}\label{421} \CL^S(\pi,\pi'): = \frac{L^S(\tfrac{1}{2},\Pi\otimes \Pi')}{L^S(1,\Pi,As^{(-1)^n})L^S(1,\Pi',As^{(-1)^{n-1}})} .\end{equation}
where $\Pi = BC(\pi)$, $\Pi' = BC(\pi')$.  Moreover, the local Euler factors $I^*_v$ are given by the explicit formula
\begin{equation}\label{Euler}
I^*_v(f_v,f'_v) = [c_{f_v}(1)c_{f'_v}(1)]^{-1}\cdot \int_{U(V')_v} c_{f_v}(g')c_{f'_v}(g') dg',
\end{equation}
where $c_{f_v}(g) = <\pi(g)f_v,f_v>_{\pi_v}$, for a fixed inner product $<\bullet,\bullet>_{\pi_v}$, and likewise for the matrix coefficient $c_{f'_v}$. Normalizations are explained in the references cited in the following theorem.  

\begin{thm}[\cite{BP18,BPLZZ, X,Z14}]\label{IItheorem}  Suppose $\pi$ and $\pi'$ are everywhere tempered and either (a) there is a non-archimedean place $v$ of $\QQ$ such that $BC(\pi_v)$ and $BC(\pi'_v)$ are
supercuspidal, or (b) both $\pi$ and $\pi'$ are stable in the sense of \cite{BPLZZ}.
  Then Conjecture \ref{conjectureII} holds.
\end{thm}

We apply this when $f = \delta^{\kappa,\kappa'}(F)$ and $f' = f_x$ in \eqref{specialx}.    If $f_x$ is classical we write it in the form
$$f_x = \frac{\overline{f^{hol}_x}}{<f^{hol}_x,f^{hol}_x>}$$
where $f^{hol}_x$ is an  holomorphic modular form (of weight $\kappa'_x$) rational over $\Qbar$, and the denominator is the Petersson inner product.  When $\pi$ and $\pi'$ are in the discrete series at archimedean places it is known thanks to a long list of people, ending with Caraiani, that $\pi$ and $\pi'$ are necessarily tempered everywhere.  
Then we have the following theorem.  
\begin{thm}\label{main}  Suppose $F$ is classical of weight $\kappa$, corresponding to the cuspidal automorphic representation $\pi$.  Fix an anti-ordinary anti-holomorphic automorphic representation $\tau$ of $U(V')$ of weight $\kappa'$, where $\kappa'$ is in $Spec(\Lambda'_\kappa)$.  Suppose $\pi$ and $\pi' = \tau$ satisfy hypotheses (a) or (b) of Theorem \ref{IItheorem}.     Let 
$f = \delta^{\kappa,\kappa'}(F) \in \pi$ be a vector $\otimes'_{v} f_v$, which is unramified outside a finite set $S$ of places containing $p$ and $\infty$.  Let $x \mapsto f'_x$ be a Hida family over $Spec(\Lambda'_\kappa)$.  We assume that, for every classical point $x$, with $f'_x$ corresponding to the automorphic representation $\tau_x$ of $U(V')$,
$f'_x$ is a factorizable vector of the form $\otimes_{v \notin S} f'_{x,v}\otimes f'_S \in \otimes_v\tau_{x,v}$, with $f_{x,v}$ spherical for $v \notin S$,  $f_{x,p}$ the anti-ordinary vector in $\tau_{x,p}$ as in \S 8.3 of \cite{EHLS}.   

We admit Hypotheses \ref{gor} and \ref{mini}
and assume $f'_S = \sum_j \otimes_{v\in S} f'_{v,j} \in \hat{I}$ is a primitive vector as in \S \ref{local}.
Fix generators $\hat{m} \in \hat{M}_\tau$ and $\omega \in \Omega_\tau$ and define $L(F,\tau) = L(F,f'_S,\tau,\hat{m},\omega)$ as in Theorem \ref{specialization}.  
Then for every classical point, the function $L(F,\tau,x)$ is an algebraic number that satisfies
$$|L(F,\tau,x)|^2 = 2^\beta\Delta_H|\delta^{\kappa,\kappa'}(F)|^2<f'_x,f'_x>^2\cdot \CL^S(\pi,\tau_x)\cdot Z_S(x)$$
where
\begin{equation}\label{localZ}
Z_S(x) = Z_\infty(x)\cdot Z_p(x)\cdot \sum_j\prod_{v \in S\setminus \{p,\infty\}} I^*_v(f_v,f'_{v,j}).
\end{equation}

Next, $Z_\infty$ is the Euler factor attached to $\delta^{\kappa,\kappa'}(F_\infty) \in \pi_\infty$ and $f_{x,\infty} \in \tau_{x,\infty}$.
Finally, $Z_p(x)$ is the Euler factor attached to the specified vectors $f_p$ and $f'_{x,p}$.
\end{thm}
\begin{proof}  By the cuspidality hypotheses on $\pi$ and $\tau$ and Theorem \ref{IItheorem} the formula in Conjecture \ref{conjectureII} is valid.  The theorem then follows by combining Theorem \ref{specialization} with \eqref{421} and \eqref{Euler}.
\end{proof}

\begin{remark}   If we don't insist that $f'_S$ be primitive, we can arrange that $Z_S$ be a product and that the local factors
for $v  \in S\setminus \{p,\infty\}$.  are volume terms.  
\end{remark}

\section{Open questions}

\subsection{Local factors at $p$}\label{locp}  The most intriguing open question is the determination of the local factor $I^*_p(f_p,f'_{x,p})$ in Conjecture \ref{conjectureII}.  Specifically, the anti-ordinary vector $f'_{x,p}$ has been identified in \cite{EHLS} as a collection of explicit vectors, of increasing level -- bounded below by a constant determined by the level at $p$ of the component $\tau_x$ of $\tau$, which belongs to the (possibly ramified) principal series of $GL(n-1,\Qp)$.  On the other hand,
$f_p$ must have the property corresponding to $p$-depletion in the classical context of elliptic modular forms.  We can start by replacing $F$ by $\Theta^1(F)$, where $\Theta^1$ is the operator introduced in Remark \ref{idempotent}.  Assuming $\kappa$ sufficiently regular, it follows from Hida's control theorem that $\Theta^1(F)$ is a classical holomorphic modular form, but of level divisible by $p$.  This probably suffices to determine the vector $f_p \in \pi_p$.  

The representation $\pi_p \times \tau_{x,p}$ can be viewed as a representation of $GL(n,\Qp)\times GL(n-1,\Qp)$, and the Ichino-Ikeda local factors $I^*_p(f_p,f'_{x,p})$ can be computed in terms of Jacquet-Piatetski-Shapiro-Shalika local factors for $GL(n)\times GL(n-1)$.  As Beuzart-Plessis explained to me, this was first observed by
Waldspurger; and independently and more explicitly in \S 18.4 of \cite{SV}.   Thus it suffices to compute the Jacquet-Piatetski-Shapiro local factors for our chosen vectors, both of which belong to principal series representations.  Since these factors behave well with respect to parabolic induction, so the calculation may not be as difficult as it appears.  

\subsection{Local factors at $\infty$}\label{locinf} The proof of Conjecture \ref{conjectureII}, in the cases in which is is known, is based on a comparison of the local Euler factors \eqref{Euler} at all places with corresponding local factors in the Jacquet-Piatetski-Shapiro-Shalika integral representation of the Rankin-Selberg $L$-functions for $GL(n)\times GL(n-1)$.  In our situation, $\delta^{\kappa,\kappa'}(F_\infty)$ and $f_{x,\infty}$ are vectors in discrete series representations of $U(r,s)$ and $U(r,s-1)$, respectively; the comparison depends on a transfer of test functions on $U(r,s)\times U(r,s-1)$ to $GL(n,\CC)\times GL(n-1,\CC)$.  As in the previous section, the local 
Euler factors  in the latter situation can be studied by means of parabolic induction, so the computation of local factors at $\infty$ mainly depends on understanding the local transfer.

\subsection{Maximal dimension}  As $b$ varies among positive integers, the $p$-adic modular forms $\Theta^b(F)$ can be paired not only with classical forms of weight $[\kappa]'+ (b,0,\dots)$ but with those of more 
general weights in the decomposition \eqref{reskappa}.  The function $L(f,\tau)$ should thus extend to a function on $min(r,s)$-dimensional Hida families.  This will be considered in future work with Ellen Eischen.

\subsection{Extension to coherent cohomology in higher degree}    Conjecture \ref{conjectureII} applies to many central values of motivic $L$-functions that are not realized as pairings of holomorphic and anti-holomorphic modular forms.  In many cases they can nevertheless be realized as cup products in higher coherent cohomology; some examples are considered in \cite{GHLin}.   Pilloni's recent development of {\it higher Hida theory} shows that higher coherent cohomology classes can also be deformed in ordinary families.  Work in progress by Loeffler, Pilloni, Skinner, and Zerbes aims to use this theory to study the $p$-adic behavior of special values of $L$-functions of certain automorphic representations, for groups of low dimension, that are known to be related to cup products in coherent cohomology.  In future work with Eischen and Pilloni, we hope to make a systematic study of square-root $p$-adic $L$-functions for $U(n)\times U(n-1)$, whenever coherent cohomology can be applied.

Many of the period integrals in Conjecture \ref{conjectureII} involve coherent cohomological representations but are not identified as cup products.  There should  be square root $p$-adic $L$-functions in these cases as well, but it is not at all clear how they can be defined.

\subsection{Slopes and the Panchishkin condition}  General conjectures on $p$-adic $L$-functions predict that they can be constructed for quite general motives, but that 
they belong to Iwasawa algebras, or finite extensions thereof, only when the motive satisfies a {\it Panchishkin condition}, which is the  analogue for the $p$-adic slope filtration of the condition on the Hodge filtration that guarantees that a special value is {\it critical} in Deligne's sense.  No such conjectural restriction has been formulated, as far as I know, for the existence of $p$-adic analytic functions with values in finite extensions of Hida's ordinary Hecke algebra (itself a finite extension of a multivariable Iwasawa algebra) that interpolate square roots of normalized critical values of $L$-functions.  The method of the present paper presupposes that the forms on $U(V')$ vary in an ordinary family but impose no restriction on the forms in the larger group $U(V)$.  Is the construction here consistent with general conjectures?

\subsection*{}


\begin{appendices}

\section{Appendix}\label{appen}  
\subsection{Review of Shimura data for unitary groups}\label{ShU}  The Shimura datum $(U(V),Y_V)$ is introduced in \S \ref{sec2}, following the discussion in \cite[\S 27]{GGP}.   We review the definition given there, in the simpler case treated here where $\CK$ is an imaginary quadratic field.
Let $V$ and $(r,s)$ be as in  the beginning of \S \ref{sec2}.  Let $GU(V)$ denote the algebraic group over $\QQ$ of unitary similitudes of $V$; in other words, it is the subgroup of $R_{\CK/\QQ}(GL(V))$ of automorphisms of $V$ that preserve the hermitian form up to a scalar.   Let $\Ss$ denote the Serre torus $R_{\CC/\RR}\mathbb{G}_{m,\CC}$.    Define a map
$$h_V = \Ss \ra GU(V)(\RR)$$
by the formula
\begin{equation} \label{hV} h_{V}(z) = \begin{pmatrix}  zI_{r} & 0 \\ 0 & \bar{z}I_{s} \end{pmatrix},
\end{equation}
where $I_r$ and $I_s$ are the identity matrices of size $r$ and $s$, respectively.  Denote by $X_V$  the $GU(V)(\RR)$-conjugacy class 
of homomorphisms  $\Ss \ra GU(V)_\RR$ containing $h_V$; thus  $(GU(V),X_V)$ is a Shimura datum

Let $V_1$ denote the vector space $\CK$, endowed with a hermitian form of signature $(0,1)$ at the chosen complex embedding $\iota$,
 and define $h_{V_1} = h_\Sigma:  \Ss \ra GU(V_1)(\RR)$ by analogy with \eqref{hV}; as above, we thus have a Shimura datum 
 $(GU(1),h_{V_1})$   Let
$G'_V \subset GU(1)\times GU(V)$ be the subgroup consisting of $(t,g)$ with $\nu(t) = \nu(g)$.  
Then the map
$$h'_V : \Ss \ra [GU(V_1) \times GU(V)](\RR);  h'_V(z) = (h_{V_1}(z),h_V(z))$$
has image contained in $G'_V(\RR)$;  thus we may write $h'_V:  \Ss \ra G'_V(\RR)$.  
Let $X'_V$ denote the $G'_V(\RR)$-conjugacy class of homomorphisms $h: \Ss \ra G'_V(\RR)$ containing $h'_V$.  Then $(G'_V,X'_V)$ is a Shimura datum, and the inclusion map
$G'_V \hookrightarrow GU(V) \times GU(1)$ induces a morphism of Shimura data
$$(G'_V,X'_V) \rightarrow (GU(V)\times GU(1),X_V \times X_1).$$

There is a natural map 
\begin{equation}\label{GUtoU}  u:  G'(V) \ra U;  u(t,g) = t^{-1}g, \forall (t,g) \in G'(V) \subset GU(1)\times GU(V)
\end{equation}
The map taking $h \in X'_V$ to $u\circ h:  \Ss \ra U(V)(\RR)$ then defines a map of Shimura data
$$(G'_V,X'_V) \rightarrow (U(V),Y_V)$$
where $Y_V$ is the $U(V)(\RR)$-conjugacy class defined by this map.   Unlike the other Shimura data introduced in this section, the pair
$(U(V),Y_V)$ is not of PEL type.  It is of abelian type, however, and arithmetic of the Shimura variety $S(U(V),Y_V)$ has been studied, nevertheless, in \cite{KSZ}.  In this paper we write $S(V)$ instead of $S(U(V),Y_V)$, and if $K \subset U(V)(\af)$ is an open compact subgroup,the corresponding finite level Shimura variety is denoted $_KS(V)$.

\subsection{Review of anti-holomorphic and anti-ordinary forms}\label{antiord}    The complex structure on the hermitian symmetric space $Y_V$ determines the complex structure on the Shimura variety $Sh(V)$, and one thus has a well-defined notion of holomorphic sections of the canonical extensions of automorphic vector bundles on toroidal compactifications of $Sh(V)$.  Such a holomorphic section $\phi$ gives rise by a {\it canonical trivialization} to an automorphic form  $F_\phi$ on $G(V)(\QQ)\backslash G(V)(\ad)$, and thus to an automorphic representation $\pi = \pi(\phi)$ of $G(V)(\ad)$.  Such a $\pi$ is called {\it holomorphic}, and $F_\phi$ is a highest $K_\infty$-type vector for an
appropriately chosen maximal compact subgroup $K_\infty \subset G(V)(\RR)$.  For all this, see \cite[\S\S 2.4, 2.5]{H97}, especially (2.4.4 bis).   In particular, if $\pi$ is holomorphic then its archimedean component $\pi_\infty$ is isomorphic to a holomorphic discrete series representation $\DD_\kappa$, as defined in \S \ref{holomo}. 

Then an anti-holomorphic automorphic representation is just the complex conjugate $\bar{\pi}$ of a holomorphic automorphic representation; equivalently, $\pi'$ is anti-holomorphic if $\pi'_\infty$ is isomorphic to the contragredient of a holomorphic discrete series representation
$\DD_\kappa$.    As functions on $G(V)(\QQ)\backslash G(V)(\ad)$, anti-holomorphic automorphic forms are just the complex conjugates of holomorphic automorphic forms.  

We use the term {\it anti-ordinary form} as in \cite{EHLS} to denote an element of the dual module $\hat S_{\kappa}^\ord(K;\CO)$ defined
in \eqref{hatS}, for appropriate level $K$ and weight $\kappa$.  The property of being anti-ordinary is determined by the valuations of a family of local Hecke operators at primes dividing $p$ that are {\it dual} to the $U$-operators used to define Hida's ordinary subspace.  The details can be found in \cite{EHLS}, \S 6.6.6 and \S 8.3.5.  For our purposes here, the main properties of anti-ordinary forms are the following:

\begin{itemize}
\item[(i)] $\hat{S}_{\kappa}^\ord(K;\CO)_\tau$ is identified with an $\CO$-lattice in 
$$
\oplus_{\tau'\in\mathcal{S}(K,\kappa,\tau)} \tau_{p,r}^{\prime,\flat,\aord}\otimes\tau_S^{\prime,\flat,K_S},
$$
\item[(ii)]   The pairing of $S_\kappa(K,\CO)$ with $\hat{S}^{ord}(K,\CO)$ factors through the ordinary projection
$$S_{\kappa}(K,\CO) \ra S_\kappa(K,\CO)^{ord}.$$
\end{itemize}

Here $\tau_{p,r}^{\prime,\flat,\aord}$ is an explicit one-dimensional anti-ordinary eigenspace (at level $K_{p,r}$, see \cite{EHLS} for details) of the local component $\tau^\flat_p$ of $\tau^\flat$; the anti-ordinary eigenspace is defined explicitly in the model of $\tau_p^\flat$ as a principal series representation.  The set $\mathcal{S}(K,\kappa,\tau)$ is roughly the set of automorphic representations
$\tau'$ that are congruent modulo $p$ to $\tau$.  
Property (i) follows from Lemma 6.6.12 (ii) and the discussion preceding Lemma 6.6.11 of \cite{EHLS}, and (ii) follows from  \cite[Lemma 8.3.4 (iii)]{EHLS}.

\end{appendices}

\end{document}